\newcommand{\bu}{\setlength{\unitlength}{1pt}\begin{picture}(2.5,2)
               (1,1)\put(2,2.5){\circle*{2}}\end{picture}}
\newcommand{\N}{\mathbb{N}}
\newcommand{\ot}{\otimes}
\newcommand{\op}{\oplus}
\newcommand{\mrm}{\mathrm}
\newcommand{\ox}{\overline{x}}
\newcommand{\oxa}{\overline{x_{1}}}
\newcommand{\oxb}{\overline{x_{2}}}
\newcommand{\K}{\mathbbm{k}}
\numberwithin{equation}{section}
\newtheorem{defi}[equation]{Definition}
\newtheorem{theo}[equation]{Theorem}
\newtheorem{lemma}[equation]{Lemma}
\title{Resolutions for Truncated \\ Ore Extensions}
\author{Dustin McPhate\\ }
\begin{document}

\maketitle
\thispagestyle{empty}

\begin{abstract}
 We extend recent results in order to construct projective resolutions for modules over twisted tensor products of truncated polynomial rings. We begin by taking note of the conditions necessary to think of these algebras as a type of Ore extension. We then use this parallel with Ore extensions to develop a method for constructing  projective resolutions.  Finally we use the new construction to compute a resolution for a family of examples.
\end{abstract}

\section{Introduction}\label{sec:introduction}
Wanting to generalize the Eilenberg-Zilber Theorem to fiber spaces, Edgar Brown published a paper in 1959 on the study of the singular cohomology of fiber spaces arising in algebraic topology. In the process of doing so he  introduced what he called a twisted tensor product of algebras. The definition arose naturally out of his attempts to give an algebraic description of certain fibrations \cite{BRN}. His construction focused on tensor products of differential graded augmented algebras, or DGA algebras, where the twisting maps were induced by the differentiation maps. 

 In 1995, motivated by a question from non-commutative differential geometry, \v Cap, Schichl, and Van\v zura revisited the idea of a twisted tensor product of algebras. Given two algebras that describe two spaces, they wanted to know what would be an appropriate notion of the product of those spaces. Intuition from the non-commutative case allowed them to introduce a more general  and much more useful definition for a twisted tensor product  of unital algebras. This definition gave a new way of thinking about many common non-commutative algebras. In particular any algebra which is isomorphic as a vector space to the tensor product of two of its subalgebras under the canonical inclusion maps is also isomorphic to some twisted tensor product of those subalgebras \cite{CSV}. In the same paper they also gave the conditions needed for the multiplication induced by a twisted tensor product to be associative. 

For quite some time the majority of the study of the homology theory of twisted tensor products focused on calculating the co/homology of some particular examples. However in 2008 Bergh and Opperman obtained very strong results concerning the cohomology of a large class of twisted tensor products. They were interested in the cohomology groups over a quantum complete intersection and so looked at twisted tensor products  of graded algebras whose twisting maps arise from a bicharacter on the grading groups.  In \cite{BOP}  they showed that the Ext-algebra of this family of twisted tensor products can be constructed by taking a twisted tensor product of the Ext-algebras of the factors. Later Shepler and Witherspoon were looking to study deformations of twisted tensor product algebras and in order to do so they wished to be able to describe the homology theory of such algebras in terms of the homology theory of their factors. So in 2019 they published a paper giving the conditions necessary for resolutions  of modules of the factor algebras to be compatible with twisting maps \cite{RTTP}. They then, in the same paper, showed how to use these compatible resolutions to construct resolutions for the twisted tensor product of the factor algebras.

Included in \cite{RTTP} are some homological methods for a class of twisted tensor products called Ore extensions. In 1933, $\O$ystien Ore introduced a new class of noncommutative rings by generalizing earlier work by Hilbert and Schlessinger \cite{ORE}. These rings and their algebra counterparts came to be known as Ore extensions. The noncommutative multiplication in these algebras arises from the use of an automorphism and a derivation. By 1966 Gopalakrishnan and Sridharan were studying the homological properties of Ore extensions \cite{GOSR}. They were able to construct resolutions for certain classes of Ore extensions. In the mentioned paper of Shepler and Witherspoon is a method for constructing projective resolutions for any Ore extension.

In this paper we give a definition for a class of associative algebras which share many similarities with Ore extensions. We call them truncated Ore extensions and in fact one may think of these algebras as quotients of Ore extensions. Some examples include $U_{q}(\mathfrak{sl}_{2})^{+}$ the positive part of the quantized universal enveloping algebra of $\mathfrak{sl}_{2}$, the family of quantum algebras $A_{q}(0|2) \cong \K[x,y]/(xy - qyx, x^{2}, y^{2})$,  and the family of Nichols algebras $R \cong \K \langle x,y \rangle /(x^{p}, y^{p}, yx - xy - \frac{1}{2}x^{2})$ used in \cite{NWW}. We then use this parallel with Ore extensions to adapt the methods of \cite{RTTP} in order to construct projective resolutions for truncated Ore extensions. The projective resolution our construction gives for the Nichols algebra $R \cong \K \langle x,y \rangle /(x^{p}, y^{p}, yx - xy - \frac{1}{2}x^{2})$  is the same as the one constructed in \cite{NWW} and in the last portion of this paper we construct a resolution for a family of algebras which include $R$. 
\section{Preliminary Information}\label{sec:prelim}
Throughout this paper we assume $\K$ is a field and $n$ is a positive integer, $n \geq 2$. We use the common notation $\ox = x + (x^n) \in \K[x]/(x^n)$ and $\ot = \ot_{\K}$.  Let  $A, B$ be associative $\K$-algebras with multiplication maps $m_{A}$ and $m_{B}$.

\begin{defi}{\em
Let $\tau$ be a bijective $\mathbbm{k}$-linear map, $ \tau: B \ot A \rightarrow A \ot B$, for which $\tau(1_{B} \ot a) = a \ot 1_{B}$ , $\tau(b \ot 1_{A}) = 1_{A} \ot b$ for all $a \in A, \,\,\, b \in B$ and for which the compositions
\begin{equation}\tau (m_{B} \ot m_{A}) = (m_{A} \ot m_{B})  (1 \ot \tau \ot 1)  (\tau \ot \tau)(1 \ot \tau \ot 1)\end{equation}
as maps from $B \ot B \ot A \ot A$ to $A \ot B$. Then $\tau$ is called a $\textit{twisting map}$.}
\end{defi}

\begin{defi}{\em
Let $\tau$ be a twisting map. The $ \textit{twisted tensor product algebra},$ $ \, \, \, \, A\ot_{\tau}B, $ is the vector space $A \ot B$ with multiplication given by the map 
\begin{equation}
(m_{A} \ot m_{B}) (1 \ot \tau \ot 1)
\end{equation}
on $A \ot  B \ot A \ot  B$.}
\end{defi}
It is shown in \cite{CSV} that multiplication given by a twisting map is associative as a consequence of relation (2.2). We also note that since $\tau$ is bijective, $\tau^{-1}$ exists and there is a natural $\K$-algebra isomorphism $A \ot_{\tau} B \cong B \ot_{\tau^{-1}} A$.

Ore extensions are a specific class of twisted tensor products constructed in the following way. Let $A$ be any associative algebra. Let $\sigma$ be a $\K$-linear automorphism of $A$, that is  $\sigma \in \textrm{Aut}_{\K}(A)$.  Finally let  $\delta$ be a left $\sigma$-derivation of $A$, i.e. $\delta:A \rightarrow A$ such that 
$$ \delta(aa') = \delta(a)a' + \sigma(a)\delta(a') \, \, \, \, \, \, \textrm{for} \, \, \,  \textrm{all} \, \, \, \,  a, a' \in A.$$
\begin{defi} {\em
The $\textit{Ore extension} \, \,  A[x; \sigma, \delta]$ is the associative algebra with underlying vector space $A[x]$ and multiplication determined by that of $A$ and $\K[x]$ with the additional Ore relation
$$xa= \sigma(a)x + \delta(a) \, \, \, \, \, \, \textrm{for} \, \, \,  \textrm{all} \, \, \, \,  a \in A .$$}
\end{defi}
Thus if we let $B = \K[x]$ and $\tau$ be the twisting map induced by $\tau(x \ot a) = \sigma(a) \ot x + \delta(a) \ot 1$ then $A[x; \sigma, \delta] \cong A \ot_{\tau} B$. 

In this paper we wish to take the idea of an Ore extension and modify it slightly to cover a family of twisted tensor products who share a similar algebraic structure with Ore extensions. We thus define an algebra whose multiplication is determined similarly to an Ore extension but has $A[x]/(x^{n})$ as an underlying vector space for some integer $n$ instead of simply $A[x]$.
 
 We note that when using a quotient as our underlying vector space in order for a map $\tau$ generated by the Ore relation above to be a twisting map we must impose conditions on $\sigma$ and $\delta$. In order for $\tau$ to induce a well-defined map on the quotient, $(x^{n}) \ot A$ must be in $\ker(\tau)$. We will first define our new class of algebras and then afterward in Theorem 2.7 we derive the conditions on $\sigma$ and $\delta$ necessary for $\tau$ to induce a well defined associative multiplication.

\begin{defi}{\em
A $\textit{truncated Ore extension}$  $A[\overline{x}; \sigma, \delta]$ is an associative algebra with underlying vector space $A[x]/(x^{n})$ and multiplication determined by that of $A$ and of $\K[x]/(x^{n})$ with the additional Ore relation 
$$ \overline{x}a = \sigma(a)\overline{x} + \delta(a) \, \, \, \, \, \, \mathrm{for} \, \, \mathrm{ all} \, \, a \in A$$ 
for some $\sigma \in \mrm{Aut}_{\K}(A)$ and $\delta$ a left $\sigma$-derivation of $A$.}
\end{defi}

In a similar fashion as above we see that if $B = \K[x]/(x^{n})$ and $\tau$ is the twisting map induced by $\tau(\ox \ot a) = \sigma(a) \ot \ox + \delta(a) \ot 1$ for any $a \in A$, then the twisted tensor product of $A$ and $B$ under $\tau$, $A \ot_{\tau} B$, is isomorphic to the truncated Ore extension $A[\overline{x}; \sigma, \delta]$.

Now before we present the conditions on $\sigma$ and $\delta$ we mentioned earlier we must first introduce some notation in order to succinctly express these relations. Let $s_{(i_{1}, i_{2}, ...,i_{k})}(x_{1}, x_{2}, ...,x_{k})$ be the polynomial in $k$ noncommuting variables, $x_{1}, x_{2}, ...,x_{k}$, that is a sum of all possible products of $i_{1}$ copies of $x_{1}$,  $i_{2}$ copies of $x_{2}$, ..., and $i_{k}$ copies of $x_{k}$. For example 
$$s_{(2,2)}(x_{1},x_{2})= x_{1}^{2} x_{2}^{2} + x_{1}x_{2}^{2}x_{1} + x_{1}x_{2}x_{1}x_{2} + x_{2}x_{1}x_{2}x_{1} + x_{2}x_{1}^{2}x_{2} +  x_{2}^{2} x_{1}^{2}.$$ 
Thus through a slight abuse of our newly introduced notation we interpret $s_{(1,2)}(\sigma, \delta)$ to be the map 
$$s_{(1,2)}(\sigma, \delta) = \sigma \delta^{2} + \delta \sigma \delta +\delta^{2}\sigma$$ 
 where the product is defined to be composition of maps. 

\begin{theo}
Let $A$ be an associative algebra and $A[x; \sigma, \delta]$ be an Ore extension. Let $\tau$ be the twisting map associated with $A[x; \sigma, \delta]$.  If the maps $\sigma, \delta: A\rightarrow A$  satisfy the relations
\begin{equation}
s_{(i,j)}(\sigma, \delta) = 0 
\end{equation}
for all $i = 0, 1, ..., n-1$, $j = 1, 2, ...., n$ such that $i +j = n$, then $\tau$ induces a well defined multiplication on $A[\ox; \sigma, \delta]= A \ot_{\tau} \K[x]/(x^{n}).$ 

\end{theo}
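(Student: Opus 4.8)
The plan is to realize the truncated Ore extension directly as a quotient of the honest Ore extension $A[x;\sigma,\delta]$, so that associativity is automatic and the only real content is identifying the relevant ideal. The first step is a straightening formula: by induction on $k$ one shows that in $A[x;\sigma,\delta]$
$$x^{k}a=\sum_{i+j=k}s_{(i,j)}(\sigma,\delta)(a)\,x^{i}\qquad(k\ge 0),$$
equivalently $\tau(x^{k}\ot a)=\sum_{i+j=k}s_{(i,j)}(\sigma,\delta)(a)\ot x^{i}$. The base case $k=1$ is the Ore relation $xa=\sigma(a)x+\delta(a)$, and the inductive step follows by applying that relation termwise and using the identity $s_{(i,j)}(\sigma,\delta)=\sigma\circ s_{(i-1,j)}(\sigma,\delta)+\delta\circ s_{(i,j-1)}(\sigma,\delta)$ (with $s_{(i,j)}=0$ for $i<0$ or $j<0$), which is nothing but grouping the words defining $s_{(i,j)}$ according to their leftmost letter.

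Next I would specialize to $k=n$. Since $s_{(n,0)}(\sigma,\delta)=\sigma^{n}$ and every other term in the sum has $i<n$, hence $i+j=n$ with $1\le j\le n$, the hypothesis (2.5) collapses the straightening formula to $x^{n}a=\sigma^{n}(a)\,x^{n}$ for all $a\in A$. From this I claim that the two-sided ideal $(x^{n})\subseteq A[x;\sigma,\delta]$ equals $A[x;\sigma,\delta]\,x^{n}$, which as a $\K$-vector space is $\bigoplus_{k\ge n}Ax^{k}$. Indeed, the relation $x^{n}a=\sigma^{n}(a)x^{n}$ gives $x^{k}a=x^{k-n}(x^{n}a)=x^{k-n}\sigma^{n}(a)x^{n}\in\bigoplus_{i\ge n}Ax^{i}$ for all $k\ge n$, so $A[x;\sigma,\delta]\,x^{n}$ is stable under right multiplication by $A$ and by $x$, hence is a two-sided ideal; it obviously contains $x^{n}$ and is contained in any two-sided ideal that does. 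This is the only place the hypothesis is genuinely used: without (2.5) the expression $x^{n}a$ acquires terms of degree $<n$, the ideal $(x^{n})$ grows accordingly, and the quotient is no longer isomorphic to $A[x]/(x^{n})$. I expect this identification of the ideal to be the one real obstacle; the induction of the first step is routine and the remainder is formal.

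Finally I would pass to $\bar A:=A[x;\sigma,\delta]/(x^{n})$. By the previous paragraph its underlying vector space is $\bigoplus_{k=0}^{n-1}Ax^{k}\cong A\ot\K[x]/(x^{n})=A[x]/(x^{n})$; its multiplication is associative as a quotient of an associative algebra; and it restricts to $m_{A}$ on the (faithful, since $A\cap(x^{n})=0$) copy of $A$, to the multiplication of $\K[x]/(x^{n})$ on the image of $\K[x]$, and satisfies $\ox\,a=\sigma(a)\ox+\delta(a)$. Thus $\bar A$ is exactly the algebra $A[\ox;\sigma,\delta]$ of Definition 2.6, and transporting the multiplication across the vector-space isomorphism above exhibits it as $A\ot_{\tau}\K[x]/(x^{n})$ with twisting map the induced map $\bar\tau=(\mrm{id}_{A}\ot\pi)\circ\tau$, where $\pi\colon\K[x]\to\K[x]/(x^{n})$; one checks $\bar\tau$ is well defined because $\tau$ sends $(x^{n})\ot A$ into $A\ot(x^{n})$ by the $k\ge n$ case above, and that $\bar\tau$ is bijective since in the basis $\{\ox^{k}\}$ it is ``upper triangular'' with invertible diagonal blocks $a\mapsto\sigma^{k}(a)$. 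An alternative, more computational route would instead verify directly from the straightening formula that $\bar\tau$ is well defined and then check the twisting-map axioms, but that would require re-deriving associativity through the hexagon identity (2.2), which the quotient argument sidesteps.
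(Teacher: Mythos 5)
Your proposal is correct, and its computational core --- the straightening formula $x^{k}a=\sum_{i+j=k}s_{(i,j)}(\sigma,\delta)(a)x^{i}$ proved by induction from the recurrence $s_{(i,j)}=\sigma\circ s_{(i-1,j)}+\delta\circ s_{(i,j-1)}$ --- is exactly the induction the paper carries out, phrased there as a formula for $\tau(x^{k}\ot a)$ rather than for $x^{k}a$. Where you genuinely diverge is in how you convert that formula into the conclusion. The paper stays entirely at the level of the twisting map: it checks that the hypothesis forces $\tau(x^{n}\ot a)$ into $A\ot(x^{n})$, so that $\tau$ descends to a map $B\ot A\to A\ot B$, and then leans on the \v Cap--Schichl--Van\v zura result that a twisting map yields an associative multiplication. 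You instead work at the level of the algebra $A[x;\sigma,\delta]$ itself, using $x^{n}a=\sigma^{n}(a)x^{n}$ to identify the two-sided ideal $(x^{n})$ with $\bigoplus_{k\ge n}Ax^{k}$ and realizing $A[\ox;\sigma,\delta]$ as the quotient. Your route buys associativity for free (a quotient of an associative algebra) and makes transparent exactly what goes wrong without the hypothesis (the ideal swallows low-degree terms and the quotient shrinks); the paper's route stays uniform with the rest of its framework, where the descended twisting map $\tau$ and the compatibility conditions $(2.11)$--$(2.12)$ are needed explicitly in the later module-theoretic constructions. You also handle one point more cleanly than the paper's write-up: the $(n,0)$ term $\sigma^{n}(a)\ot x^{n}$ does not vanish in $A\ot\K[x]$ but only modulo $(x^{n})$ in the second factor, which your ideal-theoretic formulation accounts for automatically.
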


\begin{proof}
Let $A$ be any associative algebra, $\widehat{B} = \K[x]$, $B = \K[x]/(x^{n})$, and $\tau$ be a twisting map from $\widehat{B} \ot A$ to $A \ot \widehat{B}$ given by $\tau(x \ot a) = \sigma(a) \ot x + \delta(a) \ot 1$ . Suppose $b_{0}, b_{1} \in \widehat{B}$ such that $b_{0} + (x^{n} ) = b_{1} + (x^{n}) \in B$. Since multiplication in $A[\ox; \sigma, \delta]$ is given by $(2.4)$ then for $a, a' \in A$ and $b \in \widehat{B}$, $(a \ot b_{0})(a' \ot b) = (a \ot b_{1}) (a' \ot b)$ would follow from $\tau(b_{0} \ot a') = \tau(b_{1} \ot a')$. And $\tau(b_{0} \ot a') = \tau(b_{1} \ot a')$ if and only if $\tau((b_{0} - b_{1}) \ot a' ) \in A \ot (x^{n})$. Thus in order for $\tau$  to induce a well defined twisting map from $B \ot A$ to $A \ot B$ we must have $(x^{n}) \ot A \subset \mathrm{ker}(\tau)$.  Since such a $\tau$ is a $\K$-linear twisting map it is  sufficient to show that $\tau(x^{n} \ot a) = 0$ for all $a \in A$. 

We will now show by induction that 
\begin{align}
\tau(x^{n} \ot a) = \sum_{i + j = n} s_{(i,j)}(\sigma, \delta)(a) \ot x^{i}.
\end{align}
By definition
\begin{align*}
\tau(x \ot a ) = \sigma(a) \ot x + \delta(a)  \ot 1 = s_{(1,0)}(\sigma, \delta)(a) \ot x + s_{(0,1)}(\sigma, \delta)(a) \ot 1.
\end{align*}
Now assume for $k \leq  l-1$ that 
\begin{align*}
\tau(x^{k} \ot a) = \sum_{i + j = k} s_{(i,j)}(\sigma, \delta)(a) \ot x^{i}.
\end{align*}
Then 
\begin{align*}
\tau(x^{l} \ot a) &= \tau(m_{B} \ot m_{A})( x \ot x^{l-1} \ot a \ot 1) \\
=(m_{A} \ot m_{B})(1 \ot \tau \ot 1)&(\tau \ot \tau)(1 \ot \tau \ot 1)(x \ot x^{l-1} \ot a \ot 1)\\
=(m_{A} \ot m_{B})(1 \ot \tau \ot 1)&(\tau \ot \tau)(x \ot ( \sum_{i + j = l-1} s_{(i,j)}(\sigma, \delta)(a) \ot x^{i}) \ot 1) \\
= (m_{A} \ot m_{B})(1 \ot \tau \ot 1)&(\tau \ot \tau)( \sum_{i + j = l-1} x \ot s_{(i,j)}(\sigma, \delta)(a) \ot x^{i} \ot 1)\\
= (m_{A} \ot m_{B})(1 \ot \tau \ot 1)&(\sum_{i + j = l-1}(\sigma(s_{(i,j)}(\sigma, \delta)(a)) \ot x + \delta(s_{(i,j)}(\sigma, \delta)(a)) \ot 1) \ot 1 \ot x^{i})\\
=  (m_{A} \ot m_{B})(1 \ot \tau \ot 1)&( \sum_{i + j = l-1}\sigma(s_{(i,j)}(\sigma, \delta)(a) \ot x \ot 1 \ot x^{i} \\
 &+ \sum_{i + j = l-1}\delta(s_{(i,j)}(\sigma, \delta)(a)) \ot 1 \ot 1 \ot x^{i}) \\
= (m_{A} \ot m_{B})(\sum_{i + j = l-1}\sigma(&s_{(i,j)}(\sigma, \delta)(a) \ot 1 \ot x \ot x^{i}  + \sum_{i + j = l-1}\delta(s_{(i,j)}(\sigma, \delta)(a)) \ot 1 \ot 1 \ot x^{i}) 
\end{align*}
\begin{align*}
&= \sum_{i + j = l-1}\sigma(s_{(i,j)}(\sigma, \delta)(a) \ot  x^{i+1}  + \sum_{i + j = l-1}\delta(s_{(i,j)}(\sigma, \delta)(a)) \ot  x^{i}\\
&= \sum_{i + j = l} (\sigma(s_{(i-1,j)}(\sigma, \delta)) + \delta(s_{(i,j-1)}(\sigma, \delta)))(a) \ot x^{i}  
\end{align*}
where we interpret $s_{(-1,n)}(\sigma, \delta) = s_{(n,-1)}(\sigma, \delta) = 0$.
Now $s_{(i,j)}(\sigma, \delta)$ is an expression which has as terms all possible arrangements of $i$ $\sigma$'s and $j$ $\delta$'s. We can group the terms into two sets, one which has all the terms which start with $\sigma$ and one which has all the terms which start with $\delta$. Since $s_{(i,j)}(\sigma, \delta)$ covers all possible arrangements then the terms that start with $\sigma$ contain all possible arrangements of $i-1$ $\sigma$'s and $j$ $\delta$'s. Similarly the terms which start with $\delta$ contain all possible arrangements of $i$ $\sigma$'s and $j-1$ $\delta$'s. Thus we may rewrite the expression $s_{(i,j)}(\sigma, \delta)$ in terms of this grouping to see that $$ s_{(i,j)}(\sigma, \delta) = \sigma( s_{(i-1,j)}(\sigma, \delta)) + \delta(s_{(i,j-1)}(\sigma, \delta))$$
with $s_{(0,j)}(\sigma, \delta) = \delta(s_{(0, j-1)}(\sigma, \delta)) $ and $s_{(i,0)}(\sigma, \delta) = \sigma(s_{(i-1,0)}(\sigma,\delta))$.
Therefore
\begin{align*}
\tau(x^{l} \ot a) &= \sum_{i + j = l} (\sigma(s_{(i-1,j)}(\sigma, \delta)) + \delta(s_{(i,j-1)}(\sigma, \delta)))(a) \ot x^{i}  \\
&= \sum_{i + j = l} s_{(i,j)}(\sigma, \delta)(a) \ot x^{i}.
\end{align*} 
Therefore equation $(2.9)$ holds and we see that  if $ s_{(i,j)}(\sigma, \delta) = 0$ for $i + j =n$ then $\tau$ induces a well defined multiplication on $A[\ox; \sigma, \delta]$.
\end{proof}
We end this section with some remarks on modules over twisted tensor products. 

\begin{defi}{\em
Let $A \ot_{\tau} B$ be a twisted tensor product algebra. A left $A$-module $M$ is  $\textit{compatible with} \, \, \,  \tau$ if there is a bijective $\K$-linear map $\tau_{B,M}: B \ot M \rightarrow M \ot B$ that commutes with the module structure of $M$ and multiplication in $B$. That is $\tau_{B,M}$ satisfies the relations 
\begin{equation}
\tau_{B,M} (m_{B} \ot 1) = (1 \ot m_{B})  (\tau_{B,M} \ot 1)  (1 \ot \tau_{B,M})
\end{equation}
\begin{equation}
\tau_{B,M}  (1 \ot \rho_{A,M}) = (\rho_{A,M} \ot 1) (1 \ot \tau_{B,M})  (\tau \ot 1)  
\end{equation}
where $\rho_{A,M}$ is the left $A$-module structure map.}
\end{defi}
 Note that a similar definition holds for a left $B$-module $N$ and the twisting map $\tau^{-1}$. 

If $M$ is a left $A$-module compatible with $\tau$ and $N$ is a left $B$-module then by \cite[Thm. 3.8]{CSV} we may give $M \ot N$ the structure of an $A \ot_{\tau} B $-left module via the composition of maps\\

\begin{tikzcd}[column sep=huge]
(A \ot_{\tau} B) \ot M \ot N \arrow[r, "1 \ot \tau_{B,M} \ot 1"] & A \ot M \ot B \ot N \arrow[r, "\rho_{A,M} \ot \rho_{B,N}"] & M \ot N .\\
\end{tikzcd}

The definition of compatibility with $\tau$ can also be extended to resolutions of modules as well. 
\begin{defi}{\em
 Let $M$ be a left $A$-module compatible with $\tau$ and $P_{\bu}(M)$ be a resolution of $M$. The resolution $P_{\bu}(M)$ is said to be $\textit{compatible with} \, \, \, \tau$ if there is a chain map $\tau_{B,\bu}:B \ot P_{\bu}(M) \rightarrow P_{\bu}(M) \ot B $ such that each $P_{i}(M)$ is compatible with $\tau$ via $\tau_{B,i}:B \ot P_{i}(M) \rightarrow P_{i}(M) \ot B$ and $\tau_{B,\bu}$  lifts $\tau_{B,M}$. }
\end{defi}
We note that this definition has an analog for $B$-module resolutions.  

\section{Truncated Ore Extensions}\label{sec:ext}

$\textbf{Left Modules over Truncated Ore Extensions}$. 

Given $M$, a left module over some truncated Ore extension $A[\overline{x}; \sigma, \delta]$, we wish to construct a projective resolution for $M$. To do this we will adapt methods from \cite{RTTP}. These methods first depend upon our ability to view $A[\ox; \sigma, \delta]$ as a twisted tensor product. Then we must show that, upon restriction to a left $A$-module, $M$ is compatible with the associated twisting map $\tau$. Finally using a resolution of $M$ as a left $A$-module we will construct a resolution of $M$ as a $A \ot_{\tau} B  \cong A[\ox; \sigma, \delta]$-module.

Let $A$ be an associative algebra and $B = \K[x]/(x^n)$ for some $n \in \N$. Let $\sigma \in \textrm{Aut}_{\K}(A)$ and $\delta$ be a left $\sigma$-derivation of $A$ satisfying the conditions of Theorem 2.7. Hence we may view $A[\ox; \sigma, \delta]$ as the twisted tensor product $A \ot_{\tau} B$ where $\tau$ is the twisting map induced by the Ore relation. 

To show that $M$ is compatible with $\tau$ we construct a bijective $\K$-linear map $\tau_{B,M}: B \ot M \rightarrow M \ot B$.  We define $M^{\sigma}$ to be the $\K$-vector space $M$ equipped with $A$-module action given by $a\cdot_{\sigma} m = \sigma(a) \cdot m$ for all $a \in A$ and $m \in M$. Now suppose that upon restriction to $A$, there is an $A$-module isomorphism 
\begin{align}
\phi: M \to M^{\sigma}.
\end{align}
Theorem 3.2 will show that under certain conditions similar to the ones imposed on $\sigma$ and $\delta$, $M$ will be compatible with $\tau$ via the $\K$-linear map defined by setting
$$ \tau_{B,M}(1 \ot m) = m \ot 1 $$
$$ \tau_{B,M}(\ox \ot m) = \phi(m) \ot \ox + \ox \cdot m \ot 1  \, \, \, \, \, \, \, \, \, \, \, \, \,\textrm{ for all} \, \, \, \, \, m \in M$$

and then iterating with respect to relation $(2.11)$ to define $\tau_{B,M}(\ox^{k}\ot m)$. 
\vspace{5mm}

\begin{theo}
If $ \phi$ and $\ox \cdot$ satisfy the relations 
\begin{align}
 s_{(i,j)}(\phi, \ox \cdot) = 0 
\end{align}
as maps from $M$ to $M$ for all $i + j = n$ with $ 1 \leq i \leq n-1 $ and $1 \leq j \leq n-1$ then $M$ is compatible with $\tau$ via $\tau_{B,M}$. That is, $\tau_{B,M}$ satisfies relations $(2.11)$ and $(2.12)$. Note that here we are identifying $M^{\sigma}$ with $M$ as vector spaces for the purposes of notation.
\end{theo}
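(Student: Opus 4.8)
The plan is to imitate the proof of Theorem 2.7, with $\phi$ playing the role of $\sigma$ and left multiplication by $\ox$ (written $\ox\cdot$) playing the role of $\delta$, and to pin down the one place where the truncation $\ox^{n}=0$ enters. First I would show by induction on $k$, exactly as in the derivation of $(2.9)$, that iterating $(2.11)$ forces the closed formula $\tau_{B,M}(\ox^{k}\ot m)=\sum_{i+j=k}s_{(i,j)}(\phi,\ox\cdot)(m)\ot\ox^{i}$ for $0\le k\le n-1$: the step from $k-1$ to $k$ applies $(2.11)$ to $\ox\ot\ox^{k-1}\ot m$, substitutes the defining values of $\tau_{B,M}$ on $\ox\ot(-)$ and $1\ot(-)$, and invokes the recursion $s_{(i,j)}=\phi\,s_{(i-1,j)}+(\ox\cdot)\,s_{(i,j-1)}$ (with $s_{(k,0)}=\phi^{k}$ and $s_{(0,k)}=(\ox\cdot)^{k}$) already isolated in the proof of Theorem 2.7.

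Next I would check that this formula actually defines a map on $B\ot M$, i.e.\ that it is consistent with $\ox^{n}=0$ in $B=\K[x]/(x^{n})$: one needs the would-be value $\tau_{B,M}(\ox^{n}\ot m)=\sum_{i+j=n}s_{(i,j)}(\phi,\ox\cdot)(m)\ot\ox^{i}$ to vanish. The $i=n$ term dies because $\ox^{n}=0$ in $B$; the $i=0$ term is $(\ox\cdot)^{n}(m)\ot1=\ox^{n}\cdot m\ot1=0$ since $\ox^{n}=0$ in $A[\ox;\sigma,\delta]$; and the terms with $1\le i\le n-1$ are precisely the maps that the hypothesis requires to vanish. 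Bijectivity is then formal: filtering $B\ot M$ and $M\ot B$ by $\ox$-degree, the formula makes $\tau_{B,M}$ a filtered map whose associated graded is $\ox^{k}\ot m\mapsto\phi^{k}(m)\ot\ox^{k}$, a bijection because $\phi$ is.

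For relation $(2.11)$ I would evaluate both sides on $\ox^{a}\ot\ox^{b}\ot m$. Applying the closed formula twice, the right side equals $\sum_{i+j=b}\sum_{p+q=a}\bigl(s_{(p,q)}(\phi,\ox\cdot)\circ s_{(i,j)}(\phi,\ox\cdot)\bigr)(m)\ot\ox^{p+i}$, and the identity $\sum_{p+i=r}s_{(p,a-p)}\circ s_{(i,b-i)}=s_{(r,a+b-r)}$ (valid for any two operators, since each word of length $a+b$ occurs exactly once, split after its first $a$ letters) collapses it to $\sum_{r}s_{(r,a+b-r)}(\phi,\ox\cdot)(m)\ot\ox^{r}$; for $a+b\le n-1$ this is precisely $\tau_{B,M}(\ox^{a+b}\ot m)$, the left side. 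For $a+b\ge n$ the left side is $\tau_{B,M}(0\ot m)=0$, while on the right only $0\le r\le n-1$ survives and there $a+b-r\ge1$; here I would invoke the auxiliary vanishing $s_{(r,s)}(\phi,\ox\cdot)=0$ for $0\le r\le n-1$, $s\ge1$, $r+s\ge n$, which follows by induction on $r+s$ from the recursion together with $(\ox\cdot)^{n}=0$ and the degree-$n$ hypothesis, and which importantly never calls on $s_{(i,j)}$ with $i\ge n$, so the non-vanishing of $\phi^{n}$ does no harm. For relation $(2.12)$ I would induct on $k$ in $\ox^{k}\ot a\ot m$: the cases $k=0,1$ are direct, using that $\phi\colon M\to M^{\sigma}$ is an $A$-module isomorphism (so $\phi(a\cdot m)=\sigma(a)\cdot\phi(m)$, by $(3.1)$), that the Ore relation holds in the module $M$ (so $\ox\cdot(a\cdot m)=\sigma(a)\cdot(\ox\cdot m)+\delta(a)\cdot m$), and that $\tau(\ox\ot a)=\sigma(a)\ot\ox+\delta(a)\ot1$; the step $k\ge2$ then follows from the now-available $(2.11)$ together with relation $(2.2)$ for $\tau$ -- equivalently, once $(2.11)$ holds both sides of $(2.12)$ are maps of left $B$-modules (with $M\ot B$ carrying the action twisted by $\tau_{B,M}$) and $B\ot A\ot M$ is generated over $B$ by $1\ot A\ot M$, on which the two sides visibly agree.

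I expect the genuine obstacle to be the descent rather than the bookkeeping: one must be certain the closed formula survives the quotient $\K[x]\twoheadrightarrow\K[x]/(x^{n})$ and that relations $(2.11)$ and $(2.12)$ are not disturbed by it. This is concentrated in checking well-definedness and in the $a+b\ge n$ case of $(2.11)$, and both reduce to the vanishing of $s_{(i,j)}(\phi,\ox\cdot)$ in total degree at least $n$ with first index at most $n-1$ -- which is exactly what the hypothesis $s_{(i,j)}(\phi,\ox\cdot)=0$ provides, the boundary cases being automatic since $(\ox\cdot)^{n}=0$ for free while $\phi^{n}$, the one $s$ in degree $n$ not assumed to vanish, is never needed.
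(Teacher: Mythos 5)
Your proposal is correct and follows essentially the same route as the paper: establish the closed formula $\tau_{B,M}(\ox^{k}\ot m)=\sum_{i+j=k}s_{(i,j)}(\phi,\ox\cdot)(m)\ot\ox^{i}$ by the same induction as in Theorem 2.7, observe that the degree-$n$ vanishing needed for consistency with $\ox^{n}=0$ splits into the $i=n$ and $i=0$ boundary terms (killed by $\ox^{n}=0$ in $B$ and in $A[\ox;\sigma,\delta]$ respectively) plus the middle terms killed by the hypothesis, and then prove $(2.12)$ by induction on the power of $\ox$ using the factorization $\ox^{k}=\ox^{u}\ox^{v}$ together with $(2.2)$ and $(2.11)$ — which is exactly the content of the paper's diagram chase, phrased in your case as $B$-linearity of both sides. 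Your explicit word-splitting identity for compositions of the $s_{(i,j)}$ and the filtration argument for bijectivity are welcome additions the paper leaves implicit, but they do not change the structure of the argument.
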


\begin{proof}
Using the above definition for $\tau_{B,M}$, iterating with respect to $(2.11)$, and following an inductive proof similar to the one given in the proof of Theorem $2.7$ gives the following
$$\tau_{B,M}(\ox^{k} \ot m) = \sum_{i + j = k} s_{(i,j)}(\phi, \ox \cdot)(m) \ot \ox^{i} \, \, \, \, \, \,  \mathrm{for}  \, \, \, \mathrm{all} \, \, \, \, k \leq n  .$$
Thus $\tau_{B,M}$ satisfies relation $(2.11)$ if
\begin{align*}
0 = \tau_{B,M}(\ox^{n} \ot m) &= \tau_{B,M}(m_{B} \ot 1)(\ox \ot \ox^{n-1} \ot m) \\
&=  \sum_{i + j = n} s_{(i,j)}(\phi, \ox \cdot)(m) \ot \ox^{i}. \\
\end{align*}
But since $\ox^{n} = 0$, it follows that $ s_{(i,j)}(\phi, \ox \cdot)$ is identically $0$ when $j = n$. Also since $\ox^{n} =0$ we see that $s_{(n,0)}(\phi, \ox \cdot)(m) \ot \ox^{n} =0$. Finally by assumption we have that $s_{(i,j)}(\phi, \ox \cdot) = 0$ for $1 \leq i \leq n-1$ and $1\leq j \leq n-1$  and hence $\tau_{B,M}$ satisfies relation $(2.11)$. 

We now consider the diagram corresponding to relation $(2.12)$:
\begin{center}
\begin{tikzcd}
B \ot A \ot M \arrow[d, "\tau \ot 1"] \arrow[rr, " 1 \ot \rho_{A,M}"]& &   B \ot M \arrow[d, "\tau_{B,M}"] \\
 A \ot B \ot M \arrow[r, "1 \ot \tau_{B,M}"] &  A \ot M \ot B \arrow[r, " \rho_{A,M} \ot 1"] & M \ot B   \\
\end{tikzcd}
\end{center}
Since $\tau$, $\tau_{B,M}$, and $\rho_{A,M}$ are all $\K$-linear, in order to prove the diagram commutes it is sufficient to check that the composition of maps agree on elements of the form $\ox^{k} \ot a \ot m$ for all $k$, $0 \leq k \leq n-1$, and all $a \in A$, $m \in M$. For $k = 1$ we have 
\begin{align*}
&(\rho_{A,M} \ot 1)  (1 \ot \tau_{B,M})  (\tau \ot 1) (\ox \ot a \ot m ) \\
& =  (\rho_{A,M} \ot 1)  (1 \ot \tau_{B,M})(\sigma(a) \ot \ox \ot m + \delta(a) \ot 1 \ot m) \\
&=  (\rho_{A,M} \ot 1) (\sigma(a) \ot \phi(m) \ot \ox + \sigma(a) \ot \ox \cdot m \ot 1 + \delta(a) \ot m \ot 1) \\
&= \sigma(a) \cdot \phi(m) \ot \ox +( \sigma(a) \ox  + \delta(a) )\cdot m \ot 1  \\
&= \phi ( a \cdot m) \ot \ox + \ox a \cdot m \ot 1 \\
&= \tau_{B,M}(\ox \ot a \cdot m) = \tau_{B,M}(1 \ot \rho_{A,M} )( \ox \ot a \ot m).
\end{align*}
Now we assume that $k > 1$ and for all $l < k$ we have 
$$\tau_{B,M} (1 \ot \rho_{A,M}) (\ox^{l} \ot a \ot m )= (\rho_{A,M} \ot 1)  (1 \ot \tau_{B,M})  (\tau \ot 1) (\ox^{l} \ot a \ot m ).  $$

We consider the following diagram
\begin{center}
\begin{tikzcd}
 &  B \ot A \ot B \ot M  \arrow[r, " 1 \ot 1 \ot \tau_{B,M}"]&   B \ot A \ot M \ot B \arrow[dr, "1 \ot \rho_{A,M}  \ot 1"] &  \\
B \ot B \ot A \ot M \arrow[ur, "1 \ot \tau \ot 1"]  \arrow[d, " m_{B} \ot 1 \ot 1"] & &  & B \ot M \ot B \arrow[d, "\tau_{B,M} \ot 1"]\\
B \ot A \ot M \arrow[dr, "\tau \ot 1"] \arrow[rr, " 1 \ot \rho_{A,M}"]&   & B \ot M \arrow[dr, "\tau_{B,M}"]& M \ot B \ot B \arrow[d, "1 \ot m_{B}"]\\
 & A \ot B \ot M \arrow[r, "1 \ot \tau_{B,M}"] &  A \ot M \ot B \arrow[r, " \rho_{A,M} \ot 1"] & M \ot B   \\
\end{tikzcd}
\end{center}

Now since the map $m_{B}$ is surjective then for any  $\ox^{k} \ot a \ot m$ we have that $\ox^{k} \ot a \ot m \in \mathrm{im}(m_{B} \ot 1 \ot 1)$. In particular since $k > 1$ we may think of $\ox^{k} \ot a \ot m$ as the image of the element $\ox^{u} \ot \ox^{v} \ot a \ot m \in B \ot B \ot A \ot M$ for some $u + v = k$ with $u,v < k$. Thus given an element of the form $\ox^{u} \ot \ox^{v} \ot a \ot m \in B \ot B \ot A \ot M$, commutativity in the bottom portion of the diagram implies conditon $(2.12)$ for an element of the form $\ox^{k} \ot a \ot m$. We will first use a diagram chasing argument to show that the maps along the outside of the diagram take the same values on elements of the form $\ox^{u} \ot \ox^{v} \ot a \ot m$. We will do so by showing that the maps of some sub-diagrams take the same values on such elements. Consider the following diagram

\begin{center}
\begin{tikzcd}
 &  B \ot A \ot B \ot M \arrow[d, "\tau \ot 1 \ot 1"] \arrow[r, " 1 \ot 1 \ot \tau_{B,M}"]&   B \ot A \ot M \ot B \arrow[dr, "1 \ot \rho_{A,M}  \ot 1"] \arrow[d, "\tau \ot 1 \ot 1"]&  \\
B \ot B \ot A \ot M \arrow[ur, "1 \ot \tau \ot 1"]  \arrow[d, " m_{B} \ot 1 \ot 1"]& A \ot B \ot B \ot M \arrow[dd, "1 \ot m_{B} \ot 1"] \arrow[r, " 1 \ot 1 \ot \tau_{B,M}"]&   A \ot B \ot M \ot B  \arrow[d, "1 \ot \tau_{B,M} \ot 1"]  & B \ot M \ot B \arrow[d, "\tau_{B,M} \ot 1"]\\
B \ot A \ot M \arrow[dr, "\tau \ot 1"]&   &A \ot M \ot B \ot B \arrow[d, "1 \ot 1 \ot m_{B}"] & M \ot B \ot B \arrow[d, "1 \ot m_{B}"]\\
 & A \ot B \ot M \arrow[r, "1 \ot \tau_{B,M}"] &  A \ot M \ot B \arrow[r, " \rho_{A,M} \ot 1"] & M \ot B   \\
\end{tikzcd}
\end{center}
\vspace{3mm}
We see that in the following sub-diagram the $m$ in $\ox^{u} \ot \ox^{v} \ot a \ot m$ remains untouched. \\
\begin{center}
\begin{tikzcd}
 &  B \ot A \ot B \ot M \arrow[d, "\tau \ot 1 \ot 1"] \\
B \ot B \ot A \ot M \arrow[ur, "1 \ot \tau \ot 1"]  \arrow[d, " m_{B} \ot 1 \ot 1"]& A \ot B \ot B \ot M \arrow[dd, "1 \ot m_{B} \ot 1"] \\
B \ot A \ot M \arrow[dr, "\tau \ot 1"]&   \\
 & A \ot B \ot M
\end{tikzcd} 
\end{center} 
\vspace{3mm}
Hence we may show that the indicated composition of maps takes the same value on an element of the form $\ox^{u} \ot \ox^{v} \ot a \ot m$ by applying relation $(2.2)$ to an element of the form $\ox^{u} \ot \ox^{v} \ot a \ot 1$. We also have that the indicated composition of maps of the following diagram \vspace{3mm}
\begin{center}
\begin{tikzcd}
 B \ot A \ot B \ot M \arrow[d, "\tau \ot 1 \ot 1"] \arrow[r, " 1 \ot 1 \ot \tau_{B,M}"]&   B \ot A \ot M \ot B  \arrow[d, "\tau \ot 1 \ot 1"] \\
A \ot B \ot B \ot M \arrow[r, " 1 \ot 1 \ot \tau_{B,M}"] & A \ot B \ot M \ot B 
\end{tikzcd}
\end{center} \vspace{3mm}
take the same value on our element because the vertical and horizontal maps act on different factors. Hence regardless of the direction taken the same maps are applied to the same elements. The maps in the diagram 
\vspace{3mm}
\begin{center}
\begin{tikzcd}
 A \ot B \ot B \ot M \arrow[dd, "1 \ot m_{B} \ot 1"] \arrow[r, " 1 \ot 1 \ot \tau_{B,M}"]&   A \ot B \ot M \ot B  \arrow[d, "1 \ot \tau_{B,M} \ot 1"] \\
 &A \ot M \ot B \ot B \arrow[d, "1 \ot 1 \ot m_{B}"] \\
 A \ot B \ot M \arrow[r, "1 \ot \tau_{B,M}"] &  A \ot M \ot B
\end{tikzcd} 
\end{center} 
\vspace{3mm} 
take the same value on the element $\ox^{u} \ot \ox^{v} \ot a \ot m$ as a result of using the relation $(2.11)$ to define $\tau_{B,M}$. To show that the maps in the diagram
\vspace{3mm}
\begin{center}
\begin{tikzcd}
 B \ot A \ot M \ot B \arrow[dr, "1 \ot \rho_{A,M}  \ot 1"] \arrow[d, "\tau \ot 1 \ot 1"]&  \\
 A \ot B \ot M \ot B  \arrow[d, "1 \ot \tau_{B,M} \ot 1"]  & B \ot M \ot B \arrow[d, "\tau_{B,M} \ot 1"]\\
A \ot M \ot B \ot B \arrow[d, "1 \ot 1 \ot m_{B}"] & M \ot B \ot B \arrow[d, "1 \ot m_{B}"]\\
A \ot M \ot B \arrow[r, " \rho_{A,M} \ot 1"] & M \ot B   \\
\end{tikzcd} \end{center} \vspace{3mm}
take the same value we break it into two parts. We start with the following:
\vspace{3mm} \\
\begin{center}
\begin{tikzcd}
 B \ot A \ot M \ot B \arrow[dr, "1 \ot \rho_{A,M}  \ot 1"] \arrow[d, "\tau \ot 1 \ot 1"]&  \\
 A \ot B \ot M \ot B  \arrow[d, "1 \ot \tau_{B,M} \ot 1"]  & B \ot M \ot B \arrow[d, "\tau_{B,M} \ot 1"]\\
A \ot M \ot B \ot B \arrow[r, "\rho_{A,M} \ot 1 \ot 1"] & M \ot B \ot B \\
\end{tikzcd}
\end{center} 
\vspace{3mm}

Again assuming we started in $B \ot B \ot A \ot M$ with the element $\ox^{u} \ot \ox^{v}  \ot a \ot m$ and mapping through $B \ot A \ot B \ot M$  and into $B \ot A \ot M \ot B$ by the map $(1 \ot 1 \ot \tau_{B,M})(1 \ot \tau \ot 1)$ we see that the $\ox^{u}$ factor remains untouched. Thus the element in $B \ot A \ot M \ot B$ that we will be computing with will be a sum of elements of the form $\ox^{u} \ot a' \ot m' \ot b$ for some $a' \in A, m' \in M, b \in B$ . And since $\tau$ and $\tau_{B,M}$ are $\K$-linear it is enough to show that the compositions take the same values on $\ox^{u} \ot a' \ot m' \ot b$. But this is easily shown by a direct application of the induction hypothesis and the fact that $b$ remains untouched in the diagram. Finally we see that the composition of maps in 
\vspace{3mm}
\begin{center}
\begin{tikzcd}
A \ot M \ot B \ot B \arrow[r, "\rho_{A,M} \ot 1 \ot 1"]  \arrow[d, "1 \ot 1 \ot m_{B}"]   & M \ot B \ot B  \arrow[d, "1 \ot m_{B}"]\\
A \ot M \ot B \arrow[r, " \rho_{A,M} \ot 1"] & M \ot B   \\
\end{tikzcd}
\end{center}  
take the same value on our element because the vertical and horizontal maps act on separate factors. Hence regardless of the direction taken the same maps are applied to the same elements. Putting all these results together with the linearity of our maps and the fact that elements of the form $\ox^{u} \ot \ox^{v} \ot a \ot m$ with $0 \leq u \leq n-1$, $0\leq v \leq n-1$ form a vector space basis of $B \ot B \ot A \ot M$ we have established the fact that the following diagram commutes.

\begin{center}
\begin{tikzcd}
 &  B \ot A \ot B \ot M  \arrow[r, " 1 \ot 1 \ot \tau_{B,M}"]&   B \ot A \ot M \ot B \arrow[dr, "1 \ot \rho_{A,M}  \ot 1"] &  \\
B \ot B \ot A \ot M \arrow[ur, "1 \ot \tau \ot 1"]  \arrow[d, " m_{B} \ot 1 \ot 1"]&  &  & B \ot M \ot B \arrow[d, "\tau_{B,M} \ot 1"]\\
B \ot A \ot M \arrow[dr, "\tau \ot 1"]&   & & M \ot B \ot B \arrow[d, "1 \ot m_{B}"]\\
 & A \ot B \ot M \arrow[r, "1 \ot \tau_{B,M}"] &  A \ot M \ot B \arrow[r, " \rho_{A,M} \ot 1"] & M \ot B   \\
\end{tikzcd}
\end{center}
\vspace{3mm}

Now we consider the following diagram
\begin{center}
\begin{tikzcd}
 &  B \ot A \ot B \ot M  \arrow[r, " 1 \ot 1 \ot \tau_{B,M}"]&   B \ot A \ot M \ot B \arrow[dr, "1 \ot \rho_{A,M}  \ot 1"] &  \\
B \ot B \ot A \ot M \arrow[ur, "1 \ot \tau \ot 1"]  \arrow[d, " m_{B} \ot 1 \ot 1"] \arrow[rr, "1 \ot 1 \ot \rho_{A,M}"]& &  B \ot B \ot M \arrow[r, "1 \ot \tau_{B,M}"] \arrow[d, "m_{B} \ot 1"]& B \ot M \ot B \arrow[d, "\tau_{B,M} \ot 1"]\\
B \ot A \ot M \arrow[dr, "\tau \ot 1"] \arrow[rr, " 1 \ot \rho_{A,M}"]&   & B \ot M \arrow[dr, "\tau_{B,M}"]& M \ot B \ot B \arrow[d, "1 \ot m_{B}"]\\
 & A \ot B \ot M \arrow[r, "1 \ot \tau_{B,M}"] &  A \ot M \ot B \arrow[r, " \rho_{A,M} \ot 1"] & M \ot B   \\
\end{tikzcd}
\end{center}
\vspace{3mm}
If we again start with an element of the form $\ox^{u} \ot \ox^{v}  \ot a \ot m$ in $B \ot B \ot A \ot M$  then the maps of \vspace{3mm}
\begin{center}
\begin{tikzcd}
 &  B \ot A \ot B \ot M  \arrow[r, " 1 \ot 1 \ot \tau_{B,M}"]&   B \ot A \ot M \ot B \arrow[dr, "1 \ot \rho_{A,M}  \ot 1"] &  \\
B \ot B \ot A \ot M \arrow[ur, "1 \ot \tau \ot 1"]  \arrow[rr, "1 \ot 1 \ot \rho_{A,M}"]& &  B \ot B \ot M \arrow[r, "1 \ot \tau_{B,M}"]& B \ot M \ot B\\
\end{tikzcd}
\end{center}
\vspace{3mm}
give the same result because of the induction hypothesis and the fact that the $\ox^{u}$ factor goes untouched. The maps of the diagram 
\vspace{3mm}
\begin{center}
\begin{tikzcd}
B \ot B \ot A \ot M  \arrow[d, " m_{B} \ot 1 \ot 1"] \arrow[rr, "1 \ot 1 \ot \rho_{A,M}"]& &  B \ot B \ot M \arrow[d, "m_{B} \ot 1"]\\
B \ot A \ot M  \arrow[rr, " 1 \ot \rho_{A,M}"]&   & B \ot M \\
\end{tikzcd}
\end{center} \vspace{3mm}
clearly give the same result on our element. And finally the maps in  
\vspace{3mm}
\begin{center}
\begin{tikzcd}
 B \ot B \ot M \arrow[r, "1 \ot \tau_{B,M}"] \arrow[d, "m_{B} \ot 1"]& B \ot M \ot B \arrow[d, "\tau_{B,M} \ot 1"]\\
 B \ot M \arrow[dr, "\tau_{B,M}"]& M \ot B \ot B \arrow[d, "1 \ot m_{B}"]\\
 & M \ot B   \\
\end{tikzcd} 
\end{center} 
\vspace{3mm}
give the same result on our element because we used condition  $(2.11)$ to construct $\tau_{B,M}$. Now given that the maps on the outside of the diagram commute, the fact that compositions of maps of the previous three sub-diagrams give the same results on our element, and the surjectivity of $m_{B}$ we see that the following diagram commutes.
\vspace{3mm}
\begin{center}
\begin{tikzcd}
B \ot A \ot M \arrow[d, "\tau \ot 1"] \arrow[rr, " 1 \ot \rho_{A,M}"]&   &  B \ot M \arrow[d, "\tau_{B,M}"] \\
 A \ot B \ot M \arrow[r, "1 \ot \tau_{B,M}"] &  A \ot M \ot B \arrow[r, " \rho_{A,M} \ot 1"] & M \ot B   \\
\end{tikzcd}
\end{center}
\vspace{3mm}
Hence $\tau_{B,M}$ satisfies property $(2.12)$.
\end{proof}

Hence we now have conditions on $M$ which guarantee that it will be compatible with $\tau$. Namely from here out we will assume that $M$ is an $A[\ox; \sigma, \delta]$-module for which the $A$-module isomorphism 3.1, $\phi:M \rightarrow M^{\sigma}$, exists such that $s_{i,j}(\phi, \ox \cdot) = 0$ for all $i + j = n$ with $1 \leq i \leq n-1$ and $1 \leq j \leq n-1$.

Let $P_{\bu}(M)$ be a free resolution of $M$ as a left $A$-module
$$\begin{tikzcd}
P_{\bu}(M):  \cdots \arrow[r, "d_{2}"]& P_{1}(M) \arrow[r, "d_{1}"] & P_{0}(M)  \arrow[r, "\mu"] & M  \arrow[r] &  0.
 \end{tikzcd}$$ Our next step in the construction involves taking this resolution and showing that it is compatible with $\tau$. To do so we need a chain map $\tau_{B,\bu}:B \ot P_{\bu}(M) \rightarrow P_{\bu}(M) \ot B$. In particular we will use a chain map that takes inspiration from our twisting map $\tau$ and uses two other chain maps we will call $\sigma_{\bu}$ and $\delta_{\bu}$. We proceed by first constructing $\sigma_{\bu}$.

Using the above resolution $P_{\bu}(M)$, we construct another free resolution of $M$ 
 $$\begin{tikzcd}
P^{\sigma}_{\bu}(M):  \cdots \arrow[r, "d_{2}"]& P_{1}^{\sigma}(M) \arrow[r, "d_{1}"] & P_{0}^{\sigma}(M)  \arrow[r, "\phi^{-1}\mu"] & M  \arrow[r] &  0
 \end{tikzcd}$$
by using the module action $a \cdot_{\sigma} m = \sigma(a)\cdot m$ and setting $P_{i}^{\sigma}(M) = (P_{i}(M))^{\sigma}$  for each $i$. Then by the comparison theorem there exists an $A$-module chain map from $P_{\bu}(M)$ to $P^{\sigma}_{\bu}(M)$ which lifts the identity map on $M$. We may view this map as a $\K$-linear chain map

\begin{equation}
\sigma_{\bu}:P_{\bu}(M) \rightarrow P_{\bu}^{\sigma}(M)
\end{equation}
and note that $\sigma_{i}(a\cdot z) = \sigma(a)\cdot \sigma_{i}(z)$ for all $i \geq 0$, $a \in A$, and $z \in P_{i}(M)$. 

Before we construct our chain map  $\tau_{B,\bu}$, we must first define a left $A[\ox; \sigma, \delta]$-module action on the free $A$-modules $P_{\bu}(M)$. The following two lemmas mirror lemmas found in \cite{RTTP} and \cite{GOSR}. We show that the results still hold in the case of truncated Ore extensions. The first lemma gives the method for extending the $A$-module action to an $A[\ox; \sigma, \delta]$-action and the second gives the existence of the chain map $\delta_{\bu}$ that we need to define $\tau_{B,\bu}$. 

\begin{lemma}
Let $A$ be an associative algebra and $A[\ox; \sigma, \delta]$ be a truncated Ore extension. For any free $A$-module, $P$, there is an $A[\ox; \sigma, \delta]$-module structure on $P$ that extends the action of $A$.
\end{lemma}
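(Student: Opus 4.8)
The plan is to write down an explicit operator on $P$ to serve as the action of $\ox$. Fix an isomorphism $P \cong A \ot V$ of left $A$-modules, where $V$ is a $\K$-vector space and $A$ acts by $a' \cdot (a \ot v) = a'a \ot v$. Since $A[\ox; \sigma, \delta]$ is generated as a $\K$-algebra by (the image of) $A$ together with $\ox$, subject only to the relations holding in $A$, the relation $\ox^{n} = 0$, and the Ore relation $\ox a = \sigma(a)\ox + \delta(a)$, it suffices to produce a $\K$-linear map $X \colon P \to P$ with
\[
X(a' \cdot p) = \sigma(a') \cdot X(p) + \delta(a') \cdot p \qquad \text{for all } a' \in A,\ p \in P,
\]
together with $X^{n} = 0$: the first requirement encodes the Ore relation and the second encodes $\ox^{n} = 0$, so such an $X$ extends (uniquely) to an $A[\ox; \sigma, \delta]$-module structure on $P$ restricting to the given $A$-action.

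First I would take the simplest such $X$, the one annihilating the chosen free generators: set $X(a \ot v) = \delta(a) \ot v$ for $a \in A$, $v \in V$, and extend $\K$-linearly; this is well defined because the assignment is $\K$-bilinear in $a$ and $v$. The Ore relation is then immediate from $\delta$ being a left $\sigma$-derivation: for $a, a' \in A$ and $v \in V$,
\[
X\bigl(a' \cdot (a \ot v)\bigr) = \delta(a'a) \ot v = \bigl(\delta(a')a + \sigma(a')\delta(a)\bigr) \ot v = \delta(a') \cdot (a \ot v) + \sigma(a') \cdot X(a \ot v),
\]
and the general case follows by linearity. For nilpotency, an easy induction gives $X^{k}(a \ot v) = \delta^{k}(a) \ot v$, so $X^{n}(a \ot v) = \delta^{n}(a) \ot v$. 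Since $A[\ox; \sigma, \delta]$ is a truncated Ore extension, $\sigma$ and $\delta$ satisfy the relations of Theorem 2.7, and the $(i,j) = (0,n)$ instance of $(2.8)$ is precisely $s_{(0,n)}(\sigma, \delta) = \delta^{n} = 0$. Hence $X^{n} = 0$, and $X$ determines the required module structure on $P$.

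I do not anticipate a genuine obstacle here: the construction is essentially forced once one observes that the only relation of $A[\ox; \sigma, \delta]$ not automatic from the $A$-module axioms and the Ore relation is $\ox^{n} = 0$, and that this holds exactly because $\delta^{n} = 0$ is among the defining hypotheses of a truncated Ore extension. The two points deserving a line of justification are that checking the Ore relation and $\ox^{n} = 0$ on $P$ really does yield an action of the whole algebra — which follows from the presentation of $A[\ox; \sigma, \delta]$ by generators and relations, equivalently from the fact that $\{\, a_{\lambda}\ox^{i} : 0 \le i \le n-1 \,\}$ (with $(a_{\lambda})$ a basis of $A$) is a $\K$-basis whose products are governed by those relations — and that $P$ need not be finitely generated, which causes no difficulty since $X$ is applied to one simple tensor at a time. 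I would also remark that the choice of $X$ is far from canonical (any $\K$-linear $X$ satisfying the two displayed conditions serves), a flexibility that the subsequent construction of the chain map $\delta_{\bu}$ will make use of.
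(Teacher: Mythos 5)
Your proposal is correct and follows essentially the same route as the paper: define the action of $\ox$ on a free module by applying $\delta$ componentwise to the free generators (your $X(a \ot v) = \delta(a) \ot v$ is the paper's action on each summand of $A^{\op I}$), verify the Ore relation from the $\sigma$-derivation property, and verify $\ox^{n} = 0$ from $\delta^{n} = s_{(0,n)}(\sigma,\delta) = 0$. Your explicit remark that these two checks suffice because of the presentation of $A[\ox;\sigma,\delta]$ by generators and relations is a point the paper leaves implicit, but it does not change the argument.
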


\begin{proof}
We begin by first taking $P$ to be the free $A$-module $A$. As in \cite{RTTP} we define a left $A[x; \sigma, \delta]$-module action by letting $x$ act on $A$ by $x \cdot a= \delta(a)$ for all $a \in A$. Since $A[\ox; \sigma, \delta]$ is a truncated Ore extension we have that $\delta^{n}(a) = 0$ for all $a \in A$ thus $x^{n}\cdot a = \delta^{n}(a) = 0$. Hence the action factors through $A[x; \sigma, \delta]$ to the quotient $A[\ox; \sigma, \delta]$. Also we have
\begin{align*}
\ox a \cdot a' &= \ox \cdot (a \cdot a') =\ox \cdot (aa') = \delta(aa') \\ 
&= \delta(a)a' +\sigma(a)\delta(a') = \delta(a)\cdot a' + \sigma(a)( \ox \cdot a') \\
&= (\sigma(a)\ox + \delta(a)) \cdot a'
\end{align*}
for all $a, a' \in A$.

Now if $P$ is an arbitrary free $A$-module then $P \cong A^{\op I}$ for some index set $I$ and thus we let $\ox$ act on each summand in the manner shown above. We note as above that if we think about the action as coming from $A[x; \sigma, \delta]$ then $x^{n} \cdot z = 0$ since $x^{n}$ acts on any given $z \in P$ by acting with $x^{n}$ in each summand. Hence again the action factors through the quotient $A[\ox; \sigma, \delta]$. Also since $\ox$ acts in each summand it is trivial to show that $\ox a $ acts as $\sigma(a)\ox + \delta(a)$ on $P$ for all $a \in A$.  
 Hence every free $A$-module $P$ also has an $A[\ox; \sigma, \delta]$ structure which extends the action of $A$. 
\end{proof}

Let $M$ be an $A[\ox; \sigma, \delta]$-module as above and $P_{\bu}(M)$ be a free resolution of $M$ as an $A$-module. Let $f:M \rightarrow M$ be the function given by the action of $\ox$ on $M$, i.e. $f(m) = \ox \cdot m$. For our chain map $\tau_{B,\bu}$ we require a chain map $\delta_{\bu}$ which lifts $f$ and also plays nicely with the $A[\ox; \sigma, \delta]$-module action given in Lemma 3.5. The following lemma not only proves the existence of such a chain map but the body of the proof constitutes a method for constructing such a map. 

\begin{lemma}
There exists a $\K$-linear chain map $\delta_{\bu}: P_{\bu}(M) \rightarrow P_{\bu}(M)$ lifting $f:M \rightarrow M$ such that for each $j \geq 0$, $\delta_{j}(a\cdot z)= \sigma(a) \delta_{j}(z) + \delta(a)z $ for all $a \in A$ and $z \in P_{j}(M)$.
\end{lemma}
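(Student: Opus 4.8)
The plan is to build $\delta_{\bu}$ by induction on homological degree, essentially running the proof of the comparison theorem by hand while carrying along the twisted Leibniz identity. Adopt the conventions $d_{0} := \mu : P_{0}(M) \to M$ and $\delta_{-1} := f : M \to M$, so that the assertion to be proved reads: there are $\K$-linear maps $\delta_{j} : P_{j}(M) \to P_{j}(M)$ for $j \geq 0$ with $d_{j}\delta_{j} = \delta_{j-1}d_{j}$ and $\delta_{j}(a\cdot z) = \sigma(a)\delta_{j}(z) + \delta(a)z$ for all $a \in A$, $z \in P_{j}(M)$. The key initial observation is that $f$ itself already satisfies the twisted Leibniz rule: using the Ore relation $\ox a = \sigma(a)\ox + \delta(a)$ in $A[\ox;\sigma,\delta]$ one gets
\[
f(a\cdot m) = \ox\cdot(a\cdot m) = (\sigma(a)\ox + \delta(a))\cdot m = \sigma(a)f(m) + \delta(a)m .
\]
Thus $f = \delta_{-1}$ is the degree $-1$ piece of the prospective chain map and the induction will propagate the rule up the resolution.

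For the base step, write $P_{0}(M) = \bigoplus_{\lambda} A e_{\lambda}$; since $\mu$ is surjective, choose for each $\lambda$ an element $\delta_{0}(e_{\lambda}) \in P_{0}(M)$ with $\mu(\delta_{0}(e_{\lambda})) = f(\mu(e_{\lambda}))$, and then \emph{define} $\delta_{0}\big(\sum_{\lambda} a_{\lambda}e_{\lambda}\big) = \sum_{\lambda}\big(\sigma(a_{\lambda})\delta_{0}(e_{\lambda}) + \delta(a_{\lambda})e_{\lambda}\big)$. This is well defined and $\K$-linear because the expression $\sum a_{\lambda}e_{\lambda}$ is unique and $\sigma,\delta$ are $\K$-linear; and a one-line computation using $\sigma(aa')=\sigma(a)\sigma(a')$ together with $\delta(aa') = \delta(a)a' + \sigma(a)\delta(a')$ shows $\delta_{0}(a\cdot z) = \sigma(a)\delta_{0}(z) + \delta(a)z$ for all $a\in A$. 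Finally $\mu\delta_{0} = f\mu$: this holds on the basis by construction, and both $\mu\delta_{0}$ and $f\mu$ satisfy the twisted Leibniz rule (for $f\mu$ this follows from the displayed identity above and the $A$-linearity of $\mu$), so the two maps agree on all of $P_{0}(M)$.

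For the inductive step, assume $\delta_{0},\dots,\delta_{j-1}$ have been constructed with the twisted Leibniz rule and $d_{i}\delta_{i} = \delta_{i-1}d_{i}$ for $i \leq j-1$, and fix a free $A$-basis $\{e_{\lambda}\}$ of $P_{j}(M)$. For each $\lambda$ the element $\delta_{j-1}(d_{j}(e_{\lambda}))$ is a cycle, since $d_{j-1}\delta_{j-1}d_{j}(e_{\lambda}) = \delta_{j-2}d_{j-1}d_{j}(e_{\lambda}) = 0$ by the inductive chain-map identity and $d_{j-1}d_{j}=0$ (for $j=1$ this reads $\mu\delta_{0}d_{1}(e_{\lambda}) = f\mu d_{1}(e_{\lambda}) = 0$). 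Because $P_{\bu}(M)$ is a resolution it is exact in positive degrees, so $\ker d_{j-1} = \operatorname{im} d_{j}$ and we may pick $\delta_{j}(e_{\lambda}) \in P_{j}(M)$ with $d_{j}(\delta_{j}(e_{\lambda})) = \delta_{j-1}(d_{j}(e_{\lambda}))$. Extend $\delta_{j}$ to $P_{j}(M)$ by $\delta_{j}\big(\sum_{\lambda} a_{\lambda}e_{\lambda}\big) = \sum_{\lambda}\big(\sigma(a_{\lambda})\delta_{j}(e_{\lambda}) + \delta(a_{\lambda})e_{\lambda}\big)$; as in the base step this is well defined, $\K$-linear, and satisfies the twisted Leibniz rule. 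The chain-map identity $d_{j}\delta_{j} = \delta_{j-1}d_{j}$ then holds on all of $P_{j}(M)$: it holds on the basis by construction, and for a general $z = \sum a_{\lambda}e_{\lambda}$ one expands both sides using the twisted Leibniz rule for $\delta_{j}$, the $A$-linearity of $d_{j}$, and the twisted Leibniz rule for $\delta_{j-1}$, and the two expansions coincide term by term.

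The only genuinely delicate points are: (i) confirming that the twisted Leibniz extension $\delta_{j}(a\cdot z) = \sigma(a)\delta_{j}(z) + \delta(a)z$ is consistent on $P_{j}(M)$ — this is where freeness of $P_{j}(M)$ and the $\sigma$-derivation identity for $\delta$ are both used (the verification is short but must be done); and (ii) the cycle computation guaranteeing that a lift $\delta_{j}(e_{\lambda})$ exists at each stage, which is routine given exactness but requires threading the conventions $d_{0}=\mu$, $\delta_{-1}=f$ correctly through the induction. Everything else is bookkeeping, and the body of the argument above doubles as the promised recipe for constructing $\delta_{\bu}$ degree by degree.
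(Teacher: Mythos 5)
Your proof is correct, but it is organized differently from the paper's. The paper starts from the map $\delta'_{j}(z)=\ox\cdot z$ given by the $A[\ox;\sigma,\delta]$-module structure on the free module $P_{j}(M)$ (Lemma 3.5), observes that this map already satisfies the twisted Leibniz rule, and then uses projectivity of $P_{j}(M)$ to produce an $A$-module homomorphism $\delta''_{j}:P_{j}(M)\to P_{j}^{\sigma}(M)$ with $d_{j}\delta''_{j}=d_{j}\delta'_{j}-\delta_{j-1}d_{j}$; setting $\delta_{j}=\delta'_{j}-\delta''_{j}$ fixes the chain-map condition, and the Leibniz rule survives because the correction term is $A^{\sigma}$-linear, $\delta''_{j}(az)=\sigma(a)\delta''_{j}(z)$. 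You instead fix a free $A$-basis, lift $\delta_{j-1}d_{j}(e_{\lambda})$ through $d_{j}$ using exactness (with the conventions $d_{0}=\mu$, $\delta_{-1}=f$, which you thread correctly), and then impose the Leibniz rule by definition via the extension formula $\delta_{j}\bigl(\sum a_{\lambda}e_{\lambda}\bigr)=\sum\bigl(\sigma(a_{\lambda})\delta_{j}(e_{\lambda})+\delta(a_{\lambda})e_{\lambda}\bigr)$; the chain-map identity then propagates from the basis to all of $P_{j}(M)$ because both sides obey the same Leibniz rule, with the identity $f(a\cdot m)=\sigma(a)f(m)+\delta(a)m$ (a direct consequence of the Ore relation) anchoring the degree-zero case. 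Both arguments are comparison-theorem proofs adapted to the twisted setting and both genuinely use freeness (yours through the basis, the paper's through Lemma 3.5's summand-wise action plus projectivity). What the paper's version buys, beyond the bare statement, is that $\delta_{j}$ is exhibited as the nilpotent $\ox$-action perturbed by an $A^{\sigma}$-linear map — a decomposition that the example in Section 4 exploits explicitly when computing $\delta_{1}=\delta'_{1}-\delta''_{1}$; your construction proves the lemma as stated but does not automatically come packaged with that decomposition.
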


\begin{proof}
We let $P_{\bu}(M)$ be  the free resolution given by
$$\begin{tikzcd}
P_{\bu}(M):  \cdots \arrow[r, "d_{2}"]& P_{1}(M) \arrow[r, "d_{1}"] & P_{0}(M)  \arrow[r, "\mu"] & M  \arrow[r] &  0
 \end{tikzcd}$$
and $f:M \rightarrow M$ be defined as above. Let $j = 0$ and $\delta'_{0}$ be the map given by the action of $\ox$ on $P_{0}(M)$ as defined in Lemma $3.5$. That is  $\delta'_{0}(z) = \ox \cdot z$ for all $z \in P_{0}(M)$. If we again as in Lemma $3.5$ interpret the module actions as coming from $A[x; \sigma, \delta]$ and factoring through $A[\ox; \sigma, \delta]$, then a straightforward calculation shows that $\delta_{0}^{\prime n}(z) = 0$. 

Given $a \in A$ and $z \in P_{0}(M)$ we have $\delta'_{0}(az) = \ox \cdot az$. We identify the free $A$-module $P_{0}(M)$ with $A^{I}$ for some index set $I$. By Lemma $3.5$,  $\ox \cdot az$ is given by applying the action of $\ox$ on $A$ in each summand. Hence for each $i \in I$ we will have $\ox \cdot az_{i}$ where $z_{i} \in A$ is the $i^{\mathrm{th}}$ component of $z$. Since $\delta$ is a $\sigma$ derivation we have 
$$\ox \cdot az_{i} = \delta(az_{i})= \delta(a)z_{i} + \sigma(a)\delta(z_{i})$$
for each $i \in I$. Thus 
\begin{align*}
 \delta'_{0}(az) &= \ox \cdot az  \\
 &=\delta(a)z + \sigma(a)(\ox \cdot z) \\
 &= \delta(a)z + \sigma(a)\delta_{0}'(z)
\end{align*}

Now consider the map $\mu \delta'_{0} - f \mu: P_{0}(M) \rightarrow M^{\sigma}$. We may show that $\mu \delta'_{0} - f \mu$ is an $A$-module homomorphism via the calculations
\begin{align*}
(\mu \delta'_{0} - f \mu)(z + y) &= \mu \delta'_{0}(z +y) - f \mu(z +y) = \mu(\ox \cdot (z+y)) - f(\mu(z+y)) \\
&= \mu(\ox \cdot z) + \mu(\ox \cdot y) -  \ox \cdot (\mu(z) + \mu(y)) \\
&=( \mu(\ox \cdot z) - \ox \cdot \mu(z) ) + (\mu( \ox \cdot y)  - \ox \cdot \mu(y)) \\
&=(\mu \delta'_{0} - f \mu)(z) + (\mu \delta'_{0} - f \mu)(y)
\end{align*}
and
\begin{align*}
(\mu \delta'_{0} - f \mu)(az) &= \mu (\delta'_{0}(az) ) - f(\mu(az)) = \mu(\ox \cdot az) - \ox \cdot \mu(az) \\
&= \mu(\ox a \cdot z) - \ox a \cdot \mu(z)  = \mu( (\sigma(a) \ox + \delta(a) ) \cdot z ) - (\sigma(a) \ox + \delta(a)) \cdot \mu(z) \\
&= \mu(\sigma(a) \ox  \cdot z) + \delta(a) \cdot \mu(z) - \sigma(a) \ox \cdot \mu(z) - \delta(a) \cdot \mu(z) \\
&= \sigma(a) \cdot \mu( \ox  \cdot z)- \sigma(a)\cdot( \ox \cdot \mu(z) )= a \cdot_{\sigma} \mu(\ox \cdot z) - a \cdot_{\sigma} ( \ox \cdot \mu(z)) \\
&= a \cdot_{\sigma} (\mu(\delta'_{0}(z)) - a \cdot_{\sigma} f(\mu(z))  = a \cdot_{\sigma}(\mu\delta'_{0} - f \mu)(z)
\end{align*}
for all $a \in A$ and $z,y \in P_{0}(M)$. Since $P_{0}(M)$ is projective there exists an $A$-module homomorphism $\delta^{\prime \prime}_{0}:P_{0}(M) \rightarrow P^{\sigma}_{0}(M)$ such that $(\mu \delta'_{0} - f \mu) = \mu \delta^{\prime \prime}_{0}$. Set $\delta_{0} = \delta'_{0} - \delta^{\prime \prime}_{0}$. Then
\begin{align*}
\mu \delta_{0} &= \mu(\delta'_{0} - \delta^{\prime \prime}_{0}) = \mu \delta'_{0} - \mu \delta^{\prime \prime}_{0} \\
&= \mu \delta'_{0} - ( \mu \delta'_{0} - f \mu) = f \mu
\end{align*}
and thus $\delta_{0}$ lifts $f$. Since both $\delta'_{0}$ and $\delta^{\prime \prime}_{0}$ are $\K$-linear, $\delta_{0}$ is $\K$-linear by construction. Finally 
\begin{align*}
\delta_{0}(az) &= \delta'_{0}(az) - \delta^{\prime \prime}_{0}(az) = (\sigma(a) \delta'_{0}(z) + \delta(a)z) - a \cdot_{\sigma} \delta^{\prime \prime}_{0}(z) \\
&= \sigma(a) \cdot (\delta'_{0}(z) - \delta^{\prime \prime}_{0}(z)) + \delta(a) z \\
&= \sigma(a) \delta_{0}(z) + \delta(a) z
\end{align*}
for all $a \in A$, $z \in P_{0}(M)$. Now we let $j > 0$ and assume that for all $0 \leq l < j$ there exist $\K$-linear maps $\delta_{l}:P_{l}(M) \rightarrow P_{l}(M)$ such that $\delta_{l}(az) = \sigma(a)\delta_{l}(z) + \delta(a)z$ and $d_{l}\delta_{l} = \delta_{l-1}d_{l}$ for all $a \in A, \, z \in P_{l}(M)$. Like before we define $\delta'_{j}:P_{j}(M) \rightarrow P_{j}(M)$ to be the action of $\ox$ on $P_{j}(M)$ given by Lemma $3.5$. Again a straightforward calculation shows 
\begin{align*}
\delta'_{j}(az) &= \ox a \cdot z = (\sigma(a) \ox + \delta(a)) \cdot z \\
&= \sigma(a) \delta'_{j}(z) + \delta(a) z.
\end{align*}
for all $a \in A$, $z \in P_{j}(M)$. Consider the map $d_{j}\delta'_{j} - \delta_{j-1}d_{j} : P_{j}(M) \rightarrow P_{j-1}^{\sigma}(M)$. We first see that it is an $A$-module homomorphism by 
\begin{align*}
(d_{j}\delta'_{j} - \delta_{j-1}d_{j})(z + y) &= d_{j}\delta'_{j}(z +y)  - \delta_{j-1}d_{j}(z+y) = d_{j}(\ox \cdot (z + y)) - \delta_{j-1}(d_{j}(z) + d_{j}(y)) \\
&=d_{j}(\ox \cdot z ) + d_{j}(\ox \cdot y) - \delta_{j-1}(d_{j}(z)) - \delta_{j-1}(d_{j}(y)) \\
&= (d_{j}\delta'_{j} - \delta_{j-1}d_{j})(z) + (d_{j}\delta'_{j} - \delta_{j-1}d_{j})(y)
\end{align*}
and 
\begin{align*}
(d_{j}\delta'_{j} - \delta_{j-1}d_{j})(az) &= d_{j}(\ox a \cdot z) - \delta_{j-1}(d_{j}(az))\\ 
&= d_{j}((\sigma(a) \ox + \delta(a)) \cdot z) - \delta_{j-1}(a \cdot d_{j}(z))\\
&= d_{j}(\sigma(a) \ox \cdot z) + d_{j}(\delta(a) \cdot z) -\delta_{j-1}(a \cdot d_{j}(z)) \\
&= \sigma(a) \cdot  d_{j}(\ox \cdot z) + \delta(a) \cdot d_{j}(z) - (\sigma(a) \delta_{j-1}(d_{j}(z)) + \delta(a) d_{j}(z)) \\
&= \sigma(a) \cdot  d_{j}(\ox \cdot z) - \sigma(a) \cdot \delta_{j-1}(d_{j}(z)) \\
&= a \cdot_{\sigma} ( d_{j} \delta'_{j} - \delta_{j-1} d_{j})(z).
\end{align*}
for all $a \in A$, $y,z \in P_{j}(M)$. By the induction hypothesis we have that $\delta_{j-1}$ is a chain map and $(d_{j-1} \delta_{j-1}) d_{j} =(\delta_{j-2} d_{j-1})d_{j} =0$. Hence $ d_{j-1}( d_{j}\delta'_{j} - \delta_{j-1}d_{j}) = 0$ and $\textrm{Im}(d_{j}\delta'_{j} - \delta_{j-1}d_{j}) \subset \textrm{Ker}(d_{j-1})= \textrm{Im}(d_{j})$. Since $P_{j}(M)$ is projective there exists an $A$-module homomorphism $\delta^{\prime \prime}_{j}:P_{j}(M) \rightarrow P_{j}^{\sigma}(M)$ such that $d_{j} \delta'_{j} - \delta_{j-1}d_{j} = d_{j}\delta^{\prime \prime}_{j}$. Let $\delta_{j} = \delta'_{j} - \delta^{\prime \prime}_{j}$, then by construction $\delta_{j}$ is $\K$-linear and 
\begin{align*}
d_{j}\delta_{j} &= d_{j}( \delta'_{j} - \delta^{\prime \prime}) = d_{j} \delta'_{j} - d_{j}\delta^{\prime \prime}_{j} \\
&= d_{j} \delta'_{j} -(d_{j} \delta'_{j} - \delta_{j-1}d_{j}) \\
&= \delta_{j-1} d_{j}.
\end{align*}
Finally for all $a \in A$ and $z \in P_{j}(M)$, 
\begin{align*}
\delta_{j}(az) &= \delta'_{j}(az) - \delta^{\prime \prime}_{j}(az)  = \ox a \cdot z - \sigma(a) \cdot \delta^{\prime \prime}_{j}(z) \\
&= \sigma(a) \ox \cdot z + \delta(a) \cdot z - \sigma(a) \cdot \delta^{\prime \prime}_{j}(z) \\
&=\sigma(a) \cdot ( \delta'_{j}(z) - \delta^{\prime \prime}_{j}(z)) + \delta(a) \cdot z \\
&= \sigma(a) \delta_{j}(z) + \delta(a)z.
\end{align*}
\end{proof}

Now finally we are ready to construct our chain map $\tau_{B, \bu}$. Since our chain map will draw inspiration from the standard Ore relation we end up with restrictions on $\sigma_{\bu}$ and $\delta_{\bu}$ which mirror the restrictions that we encountered when dealing with $\tau$ and $\tau_{B,M}$. 

\begin{lemma}
Let $A[\ox; \sigma , \delta]$, $M$, $P_{\bu}(M)$, and $\tau_{B,M}$  be defined as above. We assume $M$ is compatible with $\tau$ via $\tau_{B,M}$. Let $\sigma_{\bu}$ be the chain map $(3.4)$ and $\delta_{\bu}$ be the chain map constructed in Lemma $3.6$. If $\sigma_{\bu}$ and $\delta_{\bu}$ satisfy the relations 
$$ s_{(k,j)}(\sigma_{\bu}, \delta_{\bu}) =0$$
for all $k + j = n$ with $0 \leq k \leq n-1$ and $1 \leq j  \leq n$ then the resolution $P_{\bu}(M)$ is compatible with the twisting map $\tau$.
\end{lemma}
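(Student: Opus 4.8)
The plan is to define $\tau_{B,\bu}$ by the same Ore-type recipe used for $\tau$ and $\tau_{B,M}$, only with the chain maps $\sigma_\bu$ and $\delta_\bu$ playing the roles of $\sigma$ and $\delta$: set $\tau_{B,i}(1\ot z)=z\ot 1$ and $\tau_{B,i}(\ox\ot z)=\sigma_i(z)\ot\ox+\delta_i(z)\ot 1$ for $z\in P_i(M)$, and extend to all of $\K[x]\ot P_i(M)$ by iterating relation $(2.11)$. Running the induction from the proof of Theorem $2.7$ — the recursion $s_{(a,b)}=\sigma_i\,s_{(a-1,b)}+\delta_i\,s_{(a,b-1)}$ being purely formal, hence valid verbatim with $\sigma_i,\delta_i$ substituted for $\sigma,\delta$ — gives $\tau_{B,i}(\ox^{k}\ot z)=\sum_{a+b=k}s_{(a,b)}(\sigma_i,\delta_i)(z)\ot\ox^{a}$ for every $k$. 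Everything downstream is read off from this formula.

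The first thing I would check is that $\tau_{B,i}$ descends to $B\ot P_i(M)$, i.e. that $\tau_{B,i}(\ox^{n}\ot z)=0$. By the formula this is $\sum_{a+b=n}s_{(a,b)}(\sigma_i,\delta_i)(z)\ot\ox^{a}$; the $a=n$ summand vanishes because $\ox^{n}=0$ in $B$, and the summands with $0\le a\le n-1$ (so $1\le b\le n$) vanish by the hypothesis $s_{(k,j)}(\sigma_\bu,\delta_\bu)=0$. It is worth highlighting here why the hypothesis range differs from that of Theorem $3.2$: the case $j=n$, that is $\delta_\bu^{\,n}=0$, is genuinely needed, because $\delta_\bu$ is not the action of $\ox$ on $P_\bu(M)$ — it differs from that action by the correction terms $\delta_j''$ of Lemma $3.6$ — and so is not automatically nilpotent of order $n$. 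Exactly as in the proofs of Theorems $2.7$ and $3.2$, once $\tau_{B,i}(\ox^{n}\ot z)=0$ holds, $\tau_{B,i}$ is well defined on $B\ot P_i(M)$ and automatically satisfies relation $(2.11)$.

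Next I would verify the remaining structural properties. That $\tau_{B,\bu}$ is a chain map reduces, via the displayed formula, to the identities $d_i\circ s_{(a,b)}(\sigma_i,\delta_i)=s_{(a,b)}(\sigma_{i-1},\delta_{i-1})\circ d_i$, which follow by pushing $d_i$ through one letter at a time using that $\sigma_\bu$ and $\delta_\bu$ are chain maps ($d_i\sigma_i=\sigma_{i-1}d_i$ by $(3.4)$, $d_i\delta_i=\delta_{i-1}d_i$ by Lemma $3.6$). That $\tau_{B,\bu}$ lifts $\tau_{B,M}$ amounts to commutativity of the augmentation square, which in turn reduces to $\mu\circ s_{(a,b)}(\sigma_0,\delta_0)=s_{(a,b)}(\phi,\ox\cdot)\circ\mu$; this follows the same way from $\mu\sigma_0=\phi\mu$ (since $\sigma_\bu$ lifts $\mathrm{id}_M$ and $P^{\sigma}_\bu(M)$ carries the augmentation $\phi^{-1}\mu$) and $\mu\delta_0=(\ox\cdot)\mu$ (since $\delta_0$ lifts $f$ by Lemma $3.6$). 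For bijectivity of each $\tau_{B,i}$ I would filter $B\ot P_i(M)$ by the finite filtration $G_{p}=\bigoplus_{k\le p}\ox^{k}\ot P_i(M)$: the formula shows $\tau_{B,i}$ preserves this filtration with induced map $\sigma_i^{p}$ on $G_p/G_{p-1}\cong \ox^{p}\ot P_i(M)$, so $\tau_{B,i}$ is bijective once each $\sigma_i$ is, and I would take $\sigma_\bu$ to be a chain isomorphism — arrangeable since $(-)^{\sigma}$ is an exact autoequivalence of $A$-modules carrying $P_\bu(M)\to M$ to a free resolution $P^{\sigma}_\bu(M)\to M^{\sigma}$ with terms of matching rank.

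Finally, to see that each $P_i(M)$ is compatible with $\tau$ via $\tau_{B,i}$, I would transcribe the proof of Theorem $3.2$ with $M$ replaced by $P_i(M)$, $\phi$ by $\sigma_i$, and $\ox\cdot$ by $\delta_i$: relation $(2.11)$ was handled above, and the diagram chase for relation $(2.12)$ goes through word for word — its base case uses precisely $\sigma_i(a\cdot z)=\sigma(a)\cdot\sigma_i(z)$ and $\delta_i(a\cdot z)=\sigma(a)\delta_i(z)+\delta(a)z$ from $(3.4)$ and Lemma $3.6$, while the inductive step only invokes relation $(2.2)$ for $\tau$, the induction hypothesis, and the defining relation $(2.11)$ for $\tau_{B,i}$, none of which distinguish $M$ from $P_i(M)$. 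The main obstacle I anticipate is the bookkeeping around the iterated formula for $\tau_{B,i}(\ox^{k}\ot z)$ — getting it exactly right is what makes well-definedness, the chain-map property, the lift of $\tau_{B,M}$, and conditions $(2.11)$–$(2.12)$ all fall out cleanly — together with the care needed to ensure $\sigma_\bu$ may be taken invertible so the associated-graded argument for bijectivity applies; the $(2.12)$ diagram chase is long but, being a transcription of Theorem $3.2$, introduces nothing new.
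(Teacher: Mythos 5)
Your proposal is correct and follows essentially the same route as the paper: define $\tau_{B,i}$ by the Ore recipe on $\sigma_i$ and $\delta_i$, extend via relation $(2.11)$ to get the closed formula, use the hypothesis $s_{(k,j)}(\sigma_{\bu},\delta_{\bu})=0$ to kill $\tau_{B,i}(\ox^{n}\ot z)$, and transcribe the Theorem $3.2$ diagram chase for relation $(2.12)$. You are in fact more thorough than the paper's own proof, which leaves the chain-map property, the lifting of $\tau_{B,M}$, and the bijectivity of $\tau_{B,i}$ implicit (the paper only secures bijectivity later, by assuming each $\sigma_i$ bijective in Theorem $3.10$, exactly the point your filtration argument identifies).
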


\begin{proof}
We define a $\K$-linear map $\tau_{B,i}:B \ot P_{i}(M) \rightarrow P_{i}(M) \ot B$ by taking
$$\tau_{B,i}(\ox \ot z) = \sigma_{i}(z) \ot \ox + \delta_{i}(z) \ot 1$$
 for all $z \in P_{i}(M)$ where  we then extend the map with respect to relation $(2.11)$ to obtain
\begin{equation}
\tau_{B,i}(\ox^{l} \ot z) = \sum_{ k + j = l} s_{(k,j)}(\sigma_{i}, \delta_{i})(z) \ot \ox^{k}.
\end{equation}
Thus in a situation similar to Theorems $2.7$ and $3.2$, $\tau_{B,i}$ satisfies relation $(2.11)$ if $s_{(k,j)}(\sigma_{\bu}, \delta_{\bu}) = 0$ for all $k + j = n$ with $0 \leq k \leq n-1$ and $1 \leq j  \leq n$. All that remains is to show that $\tau_{B,i}$ satisfies relation $(2.12)$. Now for any $a \in A$ and $z \in P_{i}(M)$
$$
\tau_{B,i}(\ox \ot az) = \sigma_{i}(az) \ot \ox + \delta_{i}(az) \ot 1 = \sigma(a)\sigma_{i}(z) \ot \ox + \sigma(a)\delta_{i}(z) \ot 1 + \delta(a)z \ot 1
$$
by the properties of $\sigma_{\bu}$ and $\delta_{\bu}$. Then a straightforward calculation gives
\begin{align*}
(\rho_{A,i} \ot 1)&(1 \ot \tau_{B,i})(\tau \ot 1)(\ox \ot a \ot z) \\
&= (\rho_{A,i} \ot 1)(1 \ot \tau_{B,i})(\sigma(a) \ot \ox  \ot z + \delta(a) \ot 1 \ot z) \\
&= (\rho_{A,i} \ot 1)(\sigma(a) \ot \sigma_{i}(z) \ot \ox + \sigma(a) \ot \delta_{i}(z) \ot 1 + \delta(a) \ot z \ot 1) \\
&=\sigma(a) \sigma_{i}(z) \ot \ox + \sigma(a)\delta_{i}(z) \ot 1 + \delta(a)z \ot 1)
\end{align*} 
for all $a \in A$, $ z \in P_{i}(M)$. Assume that for all $t < l$ we have 
$$\tau_{B,i} (1\ot \rho_{A,i}) (\ox^{t} \ot a \ot z )= (\rho_{A,i} \ot 1)  (1 \ot \tau_{B,i})  (\tau \ot 1) (\ox^{t} \ot a \ot z ).  $$
Then by a diagram chasing argument similar to the one found in Lemma $3.2$ we may show that 
$$\tau_{B,i} (1\ot \rho_{A,i}) (\ox^{l} \ot a \ot z )= (\rho_{A,i} \ot 1)  (1 \ot \tau_{B,i})  (\tau \ot 1) (\ox^{l} \ot a \ot z )  $$
and thus by induction on $l$ we see that condition $(2.12)$ holds for all elements of the form $\ox^{l} \ot az$.
\end{proof}

Hence we have shown that given an $A[\ox; \sigma, \delta]$-module $M$ such that $M \cong M^{\sigma}$ and a free resolution $P_{\bu}(M)$ of $M$ as a left $A$-module we may construct maps $\tau_{B,M}, \tau_{B,\bu}$ such that $M$ and $P_{\bu}(M)$ are compatible with $\tau$. Before the proof of our final theorem we introduce one more definition.

\begin{defi}{\em
Let $A \ot_{\tau} B$ be a twisted tensor product of $\K$-algebras. Let $M$ be a left $A$-module and $N$ be a left $B$-module. Let $P_{\bu}(M)$ and $P_{\bu}(N)$ be projective $A$- and  $B$-module resolutions of $M$ and $N$ respectively. We denote the differentials of $P_{\bu}(M)$ by $d'_{i}$ and the differentials of $P_{\bu}(N)$ by $d''_{j}$. The $\textit{twisted product complex}$, $X_{\bu}$, of $P_{\bu}(M)$ and $P_{\bu}(N)$ is the complex 
$$\begin{tikzcd}
\cdots \arrow[r] & X_{2} \arrow[r] & X_{1} \arrow[r] & X_{0} \arrow[r] & M \ot N \arrow[r] & 0.
\end{tikzcd}$$
where $$X_{k} = \bigoplus_{i + j = k} P_{i}(M) \ot P_{j}(N)$$
with the differentials given by
$$d_{k} = \sum_{i+j = k}(d'_{i} \ot 1 + (-1)^{i} \ot d''_{j}).$$
}
\end{defi}

We now let $P_{\bu}(B)$ be the standard projective resolution of $\K$ as a module over $B= \K[x]/(x^{n})$ with $\epsilon_{B}$ the augmentation map that takes $\ox$ to $0$:
$$\begin{tikzcd}
\cdots \arrow[r, "x \cdot"] & \K[x]/(x^{n}) \arrow[r, "x^{n-1} \cdot"] & \K[x]/(x^{n}) \arrow[r, "x \cdot"] & \K[x]/(x^{n}) \arrow[r, "\epsilon_{B}"] & \K \arrow[r] & 0.
\end{tikzcd}$$

\begin{theo}
Let $A[\ox; \sigma, \delta]= A \ot_{\tau} B$ be a truncated Ore extension. Let $M$ be a left $A[\ox; \sigma, \delta]$-module compatible with $\tau$ via $\tau_{B,M}$ for which $M \cong M^{\sigma}$ as $A$-modules. Let $P_{\bu}(M)$ be a free resolution of $M$ as a left $A$-module. Let $\sigma_{\bu}$ be the chain map of $(3.4)$, $\delta_{\bu}$ be the chain map of Lemma $3.6$, and assume $P_{\bu}(M)$ is compatible with $\tau$ via $\tau_{B,\bu}$, the chain map of Lemma $3.7$. Suppose that $\sigma_{i}:P_{i}(M) \rightarrow P_{i}(M)$ is bijective for each $i \geq 0$. Then the twisted product complex of $P_{\bu}(M)$ and $P_{\bu}(B)$ gives a projective resolution of $M$ as a left $A[\ox; \sigma, \delta]$-module. 
\end{theo}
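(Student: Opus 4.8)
The plan is to check that $X_{\bu}$, the twisted product complex of $P_{\bu}(M)$ and $P_{\bu}(B)$ from Definition $3.9$, satisfies the three requirements for being a projective resolution of $M$ over $A\ot_{\tau}B$: every term $X_{k}$ is a projective $A\ot_{\tau}B$-module, every differential $d_{k}$ is a homomorphism of $A\ot_{\tau}B$-modules, and the augmented complex is exact; and then to identify $M\ot\K$ with $M$ as $A\ot_{\tau}B$-modules. This parallels the general construction of \cite{RTTP}: the role of Theorem $3.2$, Lemma $3.5$, Lemma $3.6$, and Lemma $3.7$ is to supply, in the truncated Ore setting, all of the structural input that construction needs, while the new hypothesis that each $\sigma_{i}$ is bijective provides the one thing those lemmas leave open, namely the bijectivity of each $\tau_{B,i}$.

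First I would settle the module structures and projectivity. From the iterated formula $(3.8)$, the map $\tau_{B,i}$ is, with respect to the $\K$-basis $1,\ox,\dots,\ox^{n-1}$ of $B$, triangular with the powers of $\sigma_{i}$ on the diagonal, so bijectivity of $\sigma_{i}$ (hence of every power $\sigma_{i}^{l}$) forces $\tau_{B,i}$ to be bijective; thus each free $A$-module $P_{i}(M)$ is genuinely compatible with $\tau$ in the sense of Definition $2.10$, and by \cite[Thm.~3.8]{CSV} each $P_{i}(M)\ot P_{j}(B)$ becomes a left $A\ot_{\tau}B$-module. Choosing an $A$-basis $\{e_{s}\}$ of $P_{i}(M)$, a triangularity argument in the $\ox$-grading of $B$ --- using that the powers of $\sigma_{i}$ carry $\{e_{s}\}$ to $A$-bases of $P_{i}(M)$ --- shows that $\{e_{s}\ot 1\}$ is a free $A\ot_{\tau}B$-basis of $P_{i}(M)\ot B$. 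Since $P_{j}(B)=B$ for all $j$, each $X_{k}=\bigoplus_{i+j=k}P_{i}(M)\ot P_{j}(B)$ is therefore $A\ot_{\tau}B$-free. I expect this to be the step requiring the most new work, and it is the only one that uses the bijectivity of the $\sigma_{i}$.

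Next I would verify that $d_{k}=\sum_{i+j=k}\bigl(d'_{i}\ot 1+(-1)^{i}\ot d''_{j}\bigr)$ is $A\ot_{\tau}B$-linear; since $A$ and $\ox$ generate $A\ot_{\tau}B$, it suffices to check compatibility with the actions of $A$ and of $\ox$. The $A$-action on $P_{i}(M)\ot P_{j}(B)$ touches only the first factor and $d'_{i}$ is $A$-linear, while $1\ot d''_{j}$ commutes with it trivially. Unwinding the module structure of \cite[Thm.~3.8]{CSV} gives $\ox\cdot(z\ot w)=\sigma_{i}(z)\ot(\ox\cdot w)+\delta_{i}(z)\ot w$ for $z\in P_{i}(M)$, $w\in P_{j}(B)$; hence $d'_{i}\ot 1$ commutes with the action of $\ox$ precisely because $\sigma_{\bu}$ (the chain map $(3.4)$) and $\delta_{\bu}$ (the chain map of Lemma $3.6$) are chain maps, and $1\ot d''_{j}$ commutes with it because each differential of $P_{\bu}(B)$, being multiplication by a power of $\ox$, is a left $B$-module map. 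The signs $(-1)^{i}$ are irrelevant to linearity, and the same computation, together with the fact that $\tau_{B,\bu}$ lifts $\tau_{B,M}$, shows the augmentation $\mu\ot\epsilon_{B}\colon P_{0}(M)\ot B\to M\ot\K$ is $A\ot_{\tau}B$-linear.

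Finally, for exactness I would forget the $A\ot_{\tau}B$-structure and regard $X_{\bu}$ as the total complex of the first-quadrant double complex with terms $P_{i}(M)\ot_{\K}P_{j}(B)$. Over the field $\K$ every term is flat, so by standard homological algebra --- the K\"unneth theorem, or the spectral sequence of the double complex --- the homology of $X_{\bu}$ in degree $k$ is $\bigoplus_{i+j=k}H_{i}(P_{\bu}(M))\ot H_{j}(P_{\bu}(B))$, which is $M\ot\K$ in degree $0$ and vanishes in positive degrees, with $\mu\ot\epsilon_{B}$ inducing the isomorphism in degree $0$; thus the augmented complex is exact. It remains only to note that $M\ot\K\cong M$ as $A\ot_{\tau}B$-modules: $\K$ is a $B$-module through $\epsilon_{B}$, so $\ox$ acts on it as $0$, and therefore on $M\ot\K$ one has $\ox\cdot(m\ot 1)=\phi(m)\ot 0+(\ox\cdot m)\ot 1=(\ox\cdot m)\ot 1$, so $m\ot 1\mapsto m$ is an isomorphism of $A[\ox;\sigma,\delta]$-modules. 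Hence $X_{\bu}$ is a projective resolution of $M$ over $A[\ox;\sigma,\delta]$.
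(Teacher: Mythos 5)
Your proposal is correct, and its skeleton --- projectivity of each term, $A\ot_{\tau}B$-linearity of the differentials, exactness, and the identification $M\ot\K\cong M$ --- is exactly what the theorem needs; the difference from the paper lies in how much you prove in-line versus what the paper cites. The paper delegates both exactness and the fact that $X_{\bu}$ is a complex of $A\ot_{\tau}B$-modules to \cite[Thms.\ 5.8, 5.9]{RTTP} and spends its entire effort on projectivity, which it establishes by exhibiting $P_{i}(M)\ot B$ as isomorphic to the free module $A[\ox;\sigma,\delta]\ot_{A}P_{i}(M)$: bijectivity of $\sigma_{i}$ yields an explicit inverse for $\tau_{B,i}$, and a diagram chase using relations $(2.11)$ and $(2.12)$ shows $\tau_{B,i}$ respects the module structures. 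Your basis/triangularity argument for freeness is the same idea in different clothing --- the leading term of $\ox^{k}\cdot(e_{s}\ot 1)$ in the $\ox$-filtration is $\sigma_{i}^{k}(e_{s})\ot\ox^{k}$, and $\sigma_{i}^{k}$ carries an $A$-basis to an $A$-basis because it is a bijective $\sigma^{k}$-semilinear map --- while your direct verifications of $\ox$-linearity of the differentials (from $\sigma_{\bu}$ and $\delta_{\bu}$ being chain maps) and of exactness (K\"unneth over $\K$, since the underlying complex of vector spaces is the ordinary tensor product of complexes) in effect reprove the two cited RTTP theorems in this special case. Your route buys self-containedness and makes visible exactly where each hypothesis enters; the paper's buys brevity. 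Your closing observation that $\ox$ acts as $0$ on $\K$, so $m\ot 1\mapsto m$ is an $A[\ox;\sigma,\delta]$-isomorphism $M\ot\K\cong M$, is a point the paper leaves implicit.
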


\begin{proof}
 Let $X_{\bu}$ be the twisted product complex of $P_{\bu}(M)$ and $P_{\bu}(B)$. By assumption, $M$ and $P_{\bu}(M)$ are compatible with $\tau$ and thus by \cite[Thm. 5.8]{RTTP} and \cite[Thm. 5.9]{RTTP} the twisted product complex $X_{\bu}$ is an exact complex of left $A \ot_{\tau} B = A[\ox; \sigma, \delta]$-modules.
It is clear that as $A[\ox; \sigma, \delta]$-modules  
$$
A[\ox; \sigma, \delta] \ot_{A} P_{i}(M) \cong (A \ot_{\tau} B) \ot_{A} P_{i}(M) \cong (B \ot _{\tau^{-1}} A) \ot_{A} P_{i}(M)\cong B \ot P_{i}(M).
$$
Since $\sigma_{i} $ is bijective then we have that as vector spaces $B \ot P_{i}(M) \cong P_{i}(M)  \ot B$ via the map $\tau_{B,i}$ whose inverse is given by 
$$
z \ot \ox \mapsto \ox \ot \sigma_{i}^{-1}(z) - 1 \ot \delta_{i}(\sigma_{i}^{-1}(z)).
$$

We now consider the following diagram\\
\vspace{3mm}
\begin{center}
\begin{tikzcd}
(A \ot_{\tau} B) \ot B \ot P_{i}(M)  \arrow[d, " 1 \ot m_{B} \ot 1"] \arrow[rr, "1 \ot 1 \ot \tau_{B,i}"]& &  (A \ot_{\tau} B) \ot P_{i}(M) \ot B \arrow[d, "1 \ot \tau_{B,i} \ot 1"]\\
A\ot B \ot P_{i}( M) \arrow[d, "\tau^{-1} \ot 1"] \arrow[r, " 1 \ot \tau_{B,i}"]& A \ot P_{i}(M) \ot B \arrow[dr, "\rho_{A,i} \ot 1"]   & A \ot P_{i}(M) \ot B \ot B  \arrow{l}[swap]{1 \ot 1 \ot m_{B}} \arrow[d, "\rho_{A,i} \ot m_{B}"]\\
B \ot A \ot P_{i}(M) \arrow[r, "1 \ot \rho_{A,i}"] & B \ot P_{i}(M) \arrow[r, "\tau_{B,i}"] & P_{i}(M) \ot B \\
\end{tikzcd}
\end{center}
\vspace{3mm}

The diagram\\
\vspace{3mm}
\begin{center}
\begin{tikzcd}
 A \ot P_{i}(M) \ot B \arrow[dr, "\rho_{A,i} \ot 1"]   & A \ot P_{i}(M) \ot B \ot B  \arrow{l}[swap]{1 \ot 1 \ot m_{B}} \arrow[d, "\rho_{A,i} \ot m_{B}"]\\
 & P_{i}(M) \ot B \\
\end{tikzcd}
\end{center}
\vspace{3mm}
commutes because the maps act on different factors. The diagram\\
\vspace{3mm}
\begin{center}
\begin{tikzcd}
(A \ot_{\tau} B) \ot B \ot P_{i}(M)  \arrow[d, " 1 \ot m_{B} \ot 1"] \arrow[rr, "1 \ot 1 \ot \tau_{B,i}"]& &  (A \ot_{\tau} B) \ot P_{i}(M) \ot B \arrow[d, "1 \ot \tau_{B,i} \ot 1"]\\
A\ot B \ot P_{i}( M)  \arrow[r, " 1 \ot \tau_{B,i}"]& A \ot P_{i}(M) \ot B   & A \ot P_{i}(M) \ot B \ot B  \arrow{l}[swap]{1 \ot 1 \ot m_{B}} \\
\end{tikzcd}
\end{center}
\vspace{3mm}
commutes because $P_{i}(M)$ is compatible with $\tau$ and thus $\tau_{B,i}$ satisfies relation $(2.11)$. The diagram
\vspace{3mm}
\begin{center}
\begin{tikzcd}
A\ot B \ot P_{i}( M) \arrow[r, " 1 \ot \tau_{B,i}"]& A \ot P_{i}(M) \ot B \arrow[dr, "\rho_{A,i} \ot 1"]   & \\
B \ot A \ot P_{i}(M) \arrow[u, "\tau \ot 1"]   \arrow[r, "1 \ot \rho_{A,i}"] & B \ot P_{i}(M) \arrow[r, "\tau_{B,i}"] & P_{i}(M) \ot B \\
\end{tikzcd}
\end{center}
\vspace{3mm}
commutes because $P_{i}(M)$ is compatible with $\tau$ and thus $\tau_{B,i}$ satisfies relation $(2.12)$. Now putting these together and noting that $(\tau^{-1} \ot 1)(\tau \ot 1)$ is the identity map we see that 
\vspace{3mm}
\begin{center}
\begin{tikzcd}
(A \ot_{\tau} B) \ot B \ot P_{i}(M)  \arrow[d, " 1 \ot m_{B} \ot 1"] \arrow[rr, "1 \ot 1 \ot \tau_{B,i}"]& &  (A \ot_{\tau} B) \ot P_{i}(M) \ot B \arrow[d, "1 \ot \tau_{B,i} \ot 1"]\\
A\ot B \ot P_{i}( M) \arrow[d, "\tau^{-1} \ot 1"]&  & A \ot P_{i}(M) \ot B \ot B   \arrow[d, "\rho_{A,i} \ot m_{B}"]\\
B \ot A \ot P_{i}(M) \arrow[r, "1 \ot \rho_{A,i}"] & B \ot P_{i}(M) \arrow[r, "\tau_{B,i}"] & P_{i}(M) \ot B \\
\end{tikzcd}
\end{center}
\vspace{3mm}
commutes. Hence $\tau_{B,i}$ preserves the module structure and is thus an $A[\ox; \sigma, \delta]$-module isomorphism. Thus we have that for every $i \geq 0$, $A[\ox; \sigma, \delta] \ot_{A} P_{i}(M) \cong P_{i}(M)  \ot B = Y_{i,j}$ as left $A[\ox; \sigma, \delta]$-modules. Since $ P_{i}(M)$ is a free $A$-module for each $i$ then we have that $P_{i}(M)  \cong A^{\op J}$ for some index set $J$. Thus 
$$A[\ox; \sigma, \delta] \ot_{A} P_{i}(M) \cong A[\ox; \sigma, \delta] \ot_{A} A^{\op n_{i}} \cong (A[\ox; \sigma, \delta] \ot_{A} A)^{\op n_{i}} \cong A[\ox; \sigma, \delta]^{\op n_{i}} $$
and we see that $A[\ox; \sigma, \delta] \ot_{A} P_{i}(M)$ is a free $A[\ox; \sigma, \delta]$-module. Therefore $A[\ox; \sigma, \delta] \ot_{A} P_{i}(M)$ is a free $A[\ox; \sigma, \delta]$-module and thus projective. 
\end{proof}

\section{Example}\label{sec:example}

For our example we will construct a resolution for a class of truncated Ore extensions which  includes the Nichols algebras that were used in \cite{NWW} to prove a finite generation of cohomology result.

Let $\mathbbm{k}$ be a field of prime characteristic $p$. We now consider the family of truncated Ore extensions of the form $A[\ox; \sigma, \delta] = A \ot_{\tau}B$ for $A =  \mathbbm{k}[x_{1}]/(x_{1}^{p})$, $B = \mathbbm{k}[x_{2}]/(x_{2}^{p})$, $\sigma$ the identity map on $A$, and $\delta$ the $\sigma$-derivation defined by 
\begin{align*}
\delta(1) = 0 \, \, \, \, \textrm{and} \, \, \, \, \delta(\oxa) = \alpha \oxa^{t}
\end{align*}
for $\alpha \in \mathbbm{k}$ and $2 \leq t \leq p- 1 $. 

As in section $2$ we wish to think of our twisting map as coming from the Ore extension $A[x_{2}; \sigma, \delta]$, thus we first define a twisting map $\tau$ for $A[x_{2}; \sigma, \delta] $ using the Ore relation:
$$\tau(\oxb \ot \oxa) =\sigma(\oxa) \ot \oxb + \delta(\oxa) \ot 1 = \oxa \ot \oxb + \delta(\oxa) \ot 1 $$ and then extend with respect to relation $(2.2)$ to obtain

\begin{align}
\tau(\oxb^{r} \ot \oxa^{s}) &= \sum_{j=0}^{r} {r \choose j} (s)^{[j]}\oxa^{s-j}\delta(\oxa)^{j} \ot \oxb^{r-j} \, \, \, \,\, \, \textrm{for} \,\,\, \textrm{all} \, \, r,s \in \N \\
&=  \sum_{j=0}^{r} {r \choose j} (s)^{[j]}\alpha^{j}\oxa^{s+j(t-1)} \ot \oxb^{r-j}
\end{align}

where $(s)^{[j]} $ is the generalized rising factorial 
$$(s)^{[j]}=\prod_{i=0}^{j-1}(s + i(t-1)), \, \, \, (s)^{[0]} = 1$$

\vspace{7mm}
Now that we have a formula for $\tau$ our next step is to show that it induces a well defined multiplication on the truncated Ore extension $A[\oxb; \sigma, \delta]$. Thus given $\sigma$ and $\delta$, we must show that they satisfy relation $2.8$ of Theorem $2.7$. Before we do so we will prove a useful fact about the special case when $\sigma = id_{A}$ and $\delta^{p} = 0$. 
\begin{lemma}
Let $\K$ be a field of characteristic $p$, $\Lambda$ be any associative $\K$-algebra, $\sigma = id_{\Lambda}$ be the identity automorphism of $\Lambda$, and $\delta:\Lambda \rightarrow \Lambda$ be any derivation for which $\delta^{p} = 0$. Then the standard twisting map $\tau$ of $\Lambda[x; \sigma, \delta] $ induces a well defined multiplication on $\Lambda[\ox; \sigma, \delta]$.
\end{lemma}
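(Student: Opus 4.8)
The plan is to reduce the lemma to Theorem~2.7 and then to a pair of elementary facts about binomial coefficients in characteristic $p$. Since here the truncated Ore extension $\Lambda[\ox;\sigma,\delta]$ is built on the underlying vector space $\Lambda[x]/(x^{p})$, i.e.\ we take $n = p$, Theorem~2.7 says that the standard twisting map $\tau$ of $\Lambda[x;\sigma,\delta]$ induces a well-defined multiplication on $\Lambda[\ox;\sigma,\delta]$ as soon as $s_{(i,j)}(\sigma,\delta) = 0$ as a map $\Lambda \to \Lambda$ for every pair $(i,j)$ with $i + j = p$, $0 \le i \le p-1$, and $1 \le j \le p$. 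So it suffices to verify these finitely many identities.

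The main simplification comes from the hypothesis $\sigma = \mathrm{id}_{\Lambda}$. By definition $s_{(i,j)}(\sigma,\delta)$ is the sum of all composites obtained by arranging $i$ copies of $\sigma$ and $j$ copies of $\delta$ in some order, and there are $\binom{i+j}{j}$ such arrangements. Because $\sigma$ is the identity, each such composite collapses to $\delta^{j}$, so
\[
s_{(i,j)}(\mathrm{id}_{\Lambda},\delta) = \binom{i+j}{j}\,\delta^{j}.
\]
(One could also arrive at this via the recursion $s_{(i,j)} = \sigma s_{(i-1,j)} + \delta s_{(i,j-1)}$ established inside the proof of Theorem~2.7, which under $\sigma = \mathrm{id}$ becomes Pascal's rule for $\binom{i+j}{j}\delta^{j}$; invoking the closed form directly is cleaner.)

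Now specialize to $i + j = p$. If $1 \le j \le p-1$, then $p \mid \binom{p}{j}$, so $\binom{p}{j} = 0$ in $\K$ and hence $\binom{p}{j}\delta^{j}$ is the zero map. If $j = p$ (so $i = 0$), then $\binom{p}{p} = 1$ but $\delta^{p} = 0$ by hypothesis, so again the map vanishes. Thus $s_{(i,j)}(\mathrm{id}_{\Lambda},\delta) = 0$ for every relevant pair $(i,j)$, and Theorem~2.7 yields the conclusion.

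I do not expect a genuine obstacle here; the proof is short. The only points requiring a line of care are (a) the observation that inserting copies of $\sigma = \mathrm{id}$ into a composite does nothing, so each of the $\binom{i+j}{j}$ monomials in $s_{(i,j)}$ really does equal $\delta^{j}$, and (b) recalling that $p$ divides $\binom{p}{j}$ for $0 < j < p$, so the scalar coefficient is genuinely $0$ in $\K$ rather than merely reduced. Both are standard, and the hypothesis $\delta^{p} = 0$ is used exactly once, to kill the single boundary term with $j = p$.
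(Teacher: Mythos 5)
Your proof is correct and follows essentially the same route as the paper: collapse $s_{(i,j)}(\mathrm{id},\delta)$ to $\binom{p}{j}\delta^{j}$, kill the $1\le j\le p-1$ terms via $p\mid\binom{p}{j}$ in characteristic $p$, and invoke Theorem 2.7. You are in fact slightly more careful than the paper, which leaves the boundary case $j=p$ (where $\delta^{p}=0$ is needed) implicit.
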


\begin{proof}
Since $\sigma = id_{A}$ it follows that $$s_{(i,j)}(\sigma, \delta) = {p \choose j}\delta^{j}.$$
For $j = 1, ..., p-1$, $p$ divides ${p \choose j}$ and since $\mathrm{char}(\K) = p$, $s_{(i,j)}(\sigma, \delta) = 0 $.  Thus by Theorem $2.7$, $\tau$ induces a well defined multiplication on $\Lambda[\ox; \sigma, \delta].$
\end{proof}
 
We next show that Lemma 4.3 applies to our family of truncated Ore extensions. 

\begin{lemma}
Let $\K$ be a field of characteristic $p$. Let $A = \K[x_{1}]/(x_{1}^{p})$ with $\sigma$, $\delta$ defined as above. Let $\tau:\K[x_{2}] \ot A \rightarrow A \ot \K[x_{2}]$ be the twisting map generated by the Ore relation on $\sigma$ and $\delta$. Then $\tau$ induces a well defined  multiplication on the truncated Ore extension $A[\oxb; \sigma, \delta] \cong A \ot_{\tau} B = \mathbbm{k}[x_{1}]/(x_{1}^{p}) \ot_{\tau} \mathbbm{k}[x_{2}]/(x_{2}^{p})$. 
\end{lemma}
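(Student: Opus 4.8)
The plan is to reduce the statement to Lemma 4.3: I will check that the $\sigma$ and $\delta$ under consideration satisfy its hypotheses with $\Lambda = A$ and adjoined variable $x_{2}$, so that the conclusion follows immediately. Since $\sigma = id_{A}$ by construction and the truncation parameter here is $n = p = \mrm{char}(\K)$, the only thing to verify is that $\delta^{p} = 0$ as a $\K$-linear map $A \to A$.

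Because $A = \K[x_{1}]/(x_{1}^{p})$ has $\K$-basis $1, \oxa, \dots, \oxa^{p-1}$ and $\delta$ is $\K$-linear, it suffices to evaluate $\delta^{p}$ on each $\oxa^{s}$ with $0 \le s \le p-1$. First I would record, by an easy induction on $j$ using the Leibniz rule for $\delta$ together with the defining relation $\delta(\oxa) = \alpha\,\oxa^{t}$, the closed formula
\[
\delta^{j}(\oxa^{s}) \;=\; (s)^{[j]}\,\alpha^{j}\,\oxa^{\,s + j(t-1)},
\]
where $(s)^{[j]} = \prod_{i=0}^{j-1}(s + i(t-1))$ is the rising factorial defined above; the base case $j = 0$ is trivial and the inductive step is the single line $\delta(\oxa^{\,s+j(t-1)}) = (s + j(t-1))\,\alpha\,\oxa^{\,s + (j+1)(t-1)}$. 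Now take $j = p$. Since $t \ge 2$ gives $t - 1 \ge 1$, we have $p(t-1) \ge p$, hence $s + p(t-1) \ge p$ for every $s \ge 0$, so $\oxa^{\,s + p(t-1)} = 0$ in $A$; therefore $\delta^{p}(\oxa^{s}) = (s)^{[p]}\,\alpha^{p}\,\oxa^{\,s + p(t-1)} = 0$ for all $0 \le s \le p-1$. (The case $s = 0$ is also immediate from $\delta(1) = 0$.) Thus $\delta^{p}$ vanishes on a basis of $A$, i.e. $\delta^{p} = 0$, and Lemma 4.3 applied with $\Lambda = A$ shows that $\tau$ induces a well-defined multiplication on $A[\oxb; \sigma, \delta] = A \ot_{\tau} B$.

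There is essentially no obstacle here: the whole content is the reduction to Lemma 4.3, after which the vanishing $\delta^{p} = 0$ is forced simply because the monomial $\oxa^{\,s + p(t-1)}$ has degree at least $p$ once $t \ge 2$. The only bookkeeping point is the closed form for $\delta^{j}$, but this is the same computation that underlies the formula for $\tau$ displayed earlier in this section, so nothing new is required. One could additionally remark that $\delta$ is a genuine derivation of the quotient $A$ since $\delta(x_{1}^{p}) = p\,x_{1}^{p-1}\delta(x_{1}) = 0$ in characteristic $p$, but as $\delta$ is prescribed directly on $A$ this is automatic.
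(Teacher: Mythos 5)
Your proposal is correct and follows essentially the same route as the paper: reduce to Lemma 4.3 (so that only $\delta^{p}=0$ needs checking) and then observe that $\delta^{j}(\oxa^{s})$ is a scalar multiple of $\oxa^{\,s+j(t-1)}$, whose exponent at $j=p$ is at least $p$ since $t\ge 2$. You are slightly more careful than the paper in verifying the vanishing on the whole basis $\oxa^{s}$ rather than only on $\oxa$, but the argument is the same.
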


\begin{proof}
By the previous lemma we need only show that $\delta^{p}=0$. A calculation shows that 
\begin{align*}
\delta^{p}(\oxa) = (\prod_{i =1}^{p}[(i-1)t -(i-2)])\alpha^{2}\oxa^{p(t-1)+1}
\end{align*}
and $t \geq 2$ implies that $p(t-1) +1 > p$ hence $\delta^{p} =0$.
\end{proof}

Thus from here on out we may think of the truncated Ore extension $A[\ox; \sigma, \delta]$ as a twisted tensor product with an associated twisting map $\tau$. Next we wish to construct a projective resolution of $\K$ as an $A \ot_{\tau} B$-module. To do so we follow the construction laid out in Section 3. First we restrict $\K$ to an $A$-module and show that it is compatible with $\tau$. Since $A = \mathbbm{k}[x_{1}]/(x_{1}^{p})$ and $B = \mathbbm{k}[x_{2}]/(x_{2}^{p})$, it follows that the $A[\ox; \sigma, \delta]$-module action on $\K$ is given by the augmentation map $\epsilon(a \ot b) = \epsilon_{A}(a)\epsilon_{B}(b)$ where $\epsilon_{A}:A \rightarrow \mathbbm{k}$ and $\epsilon_{B}:B \rightarrow \mathbbm{k}$ are the standard augmentation maps for $A$ and $B$ respectively. Also $\sigma = id_{A}$ implies that for any $z \in \K$, $\sigma(a)\cdot z = a \cdot z$ and thus $\K$ is trivially isomorphic to $\K^{\sigma}$. Following the construction from Section $3$ and noting that $\oxb$ acts as $0$ on $\K$ we define $\tau_{B,\K} :B \ot \K \rightarrow \K \ot B$ by
$$\tau_{B,\K}(1 \ot z) = z \ot 1$$
for all $z \in \K$. Clearly $\phi = id_{M}$ and $\oxb \cdot$ satisfy relation $(3.3)$ and thus by Lemma $3.2$, $\K$ is compatible with $\tau$ via the map $\tau_{B,\K}(b \ot z) = z \ot b $ for all $b \in B$, $z \in \K$. In particular we may note the following: 

\vspace{3mm}

\begin{lemma}
Let $\K$ be a field, $\Lambda$ be any associative $\K$-algebra, and $A[\ox,\sigma, \delta]$ be a truncated Ore extension of $\Lambda$ with $\sigma = id_{A}$. Let $M$ be a left $A[\ox,\sigma, \delta]$-module. If $\ox$ acts as $0$ on $M$ then $\tau_{B,M}$ as defined in Section $3$ is compatible with $\tau$.
\end{lemma}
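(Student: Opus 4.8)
The plan is to recognize this as a direct specialization of Theorem 3.2 and simply verify that its two hypotheses hold automatically under the stated assumptions. The two ingredients Theorem 3.2 requires are an $A$-module isomorphism $\phi : M \to M^{\sigma}$ as in $(3.1)$ and the relations $s_{(i,j)}(\phi, \ox\cdot) = 0$ of $(3.3)$ for all $i+j = n$ with $1 \leq i \leq n-1$ and $1 \leq j \leq n-1$; once these are in hand, the map $\tau_{B,M}$ of Section 3 is defined and the theorem declares it compatible with $\tau$.

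First I would handle the isomorphism. Since $\sigma = id_{A}$, the twisted module $M^{\sigma}$ carries the action $a \cdot_{\sigma} m = \sigma(a)\cdot m = a\cdot m$, so $M^{\sigma} = M$ as left $A$-modules and we may take $\phi = id_{M}$. In particular the construction of $\tau_{B,M}$ in Section 3 goes through with this choice of $\phi$, so "$\tau_{B,M}$ as defined in Section 3" makes sense here. Next I would check $(3.3)$. By hypothesis $\ox$ acts as $0$ on $M$, i.e. the map $\ox\cdot : M \to M$ is the zero map. Recall that $s_{(i,j)}(\phi, \ox\cdot)$ is by definition the sum of all composites built from $i$ copies of $\phi$ and $j$ copies of $\ox\cdot$; whenever $j \geq 1$ each such composite contains at least one factor equal to $\ox\cdot = 0$, hence is the zero map, so $s_{(i,j)}(\phi, \ox\cdot) = 0$. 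This in particular covers every index pair with $1 \leq i \leq n-1$ and $1 \leq j \leq n-1$, which is exactly $(3.3)$.

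With both hypotheses verified, Theorem 3.2 applies and yields that $M$ is compatible with $\tau$ via $\tau_{B,M}$, satisfying relations $(2.11)$ and $(2.12)$, which is the assertion. I would optionally remark that under these hypotheses the defining formula for $\tau_{B,M}$ collapses to the trivial flip: since $s_{(k,j)}(\phi,\ox\cdot) = 0$ for $j \geq 1$ while $s_{(k,0)}(\phi,\ox\cdot) = \phi^{k} = id_{M}$, one obtains $\tau_{B,M}(\ox^{k}\ot m) = m \ot \ox^{k}$, matching the description used just above for $\K$; but this explicit form is not needed for the statement.

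I do not expect any real obstacle: the proof is purely a matter of unwinding definitions, and the only point requiring a word of care is the observation that "$\ox$ acts as $0$" forces the vanishing of every monomial of $s_{(i,j)}$ with $j \geq 1$, which is immediate from the fact that $s_{(i,j)}$ ranges over all arrangements of the two operators.
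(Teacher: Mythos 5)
Your proposal is correct and matches the paper's own argument, which likewise reduces the statement to Theorem 3.2 by noting that $\sigma = id_{A}$ forces $\phi = id_{M}$; your additional verification that every monomial of $s_{(i,j)}(\phi,\ox\cdot)$ with $j \geq 1$ vanishes is exactly the implicit step the paper leaves to the reader. No issues.
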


\begin{proof}
The proof follows directly from Lemma $3.2$ and the fact that $\sigma = id_{A} $ implies that $\phi = id_{M}$.

\end{proof}

We now construct our chain map $\tau_{B, \bu}$ lifting $\tau_{B,\K}$ by first letting $P_{\bu}(A)$ be the standard   resolution of $\K$ as an $A$-module.
$$\begin{tikzcd}
P_{\bu}(A):  \cdots \arrow[r, "\oxa \cdot"]& A \arrow[r, "\oxa^{p -1} \cdot "] & A  \arrow[r, "\oxa \cdot"] & A  \arrow[r, "\epsilon_{A}"] & \mathbbm{k} \arrow[r] & 0
 \end{tikzcd}$$

Since $\sigma = id_{A}$ we may let
$$\sigma_{\bu}:P_{\bu}(A) \rightarrow P_{\bu}(A)$$
be given by $\sigma_{i} = id_{P_{i}(A)} $ for every $i$. Also since $P_{i}(A) = A$ for every $i$ we may set the $A[\oxb; \sigma, \delta]$-module action on $P_{i}(A)= A$ to be given by 
$$\oxb \cdot a = \delta(a)$$
for every $i$ and all $a \in A$.

\vspace{5mm}

We now have our $\sigma_{\bu}$ and an $A[\ox; \sigma, \delta]$-module action on our projective resolution. The next step in the construction of $\tau_{B,\bu}$ is to construct $\delta_{\bu}$. We do so by constructing  two maps for each  $i$ and then letting $\delta_{i}$ be the difference of those two maps. Hence for each $i$ we will have  $\delta_{i} = \delta_{i}' - \delta_{i}''$. We proceed with the construction of $\delta_{0}, \delta_{1}$, and $\delta_{2}$ given in the proof of Lemma $3.6$ and note that the construction of $\delta_{j}$ will be similar to $\delta_{1}$ if $j$ is odd and will be similar to $\delta_{2}$ if $j$ is even. 

Let $f:\K \rightarrow \K$ be the map given by the action of $\oxb$ on $\K$. Then $f(z) = 0$ for all $z \in \K$. Thus we have
\begin{align*}
\delta_{0}'(z) &= \oxb \cdot z = \delta(z)\\
(\epsilon_{A}\delta - f \epsilon_{A})(z)&=\epsilon_{A}(\delta(z)) - 0
\end{align*}
but since $\delta(z) \in (\oxb)$, $\epsilon_{A}(\delta(z)) = 0$. Hence $\delta_{0}'' =0$ and $\delta_{0} = \delta_{0}' - \delta_{0}'' = \delta$. Therefore 

$$\tau_{B,0}(\oxb \ot \oxa) = \sigma_{0}(\oxa) \ot \oxb + \delta_{0}(\oxa) \ot 1 =\oxa \ot \oxb + \delta(\oxa) \ot 1 = \tau(\oxb \ot \oxa)$$
Then extending by conditions $(2.11)$ and $(2.12)$ we obtain 
\begin{equation}
 \tau_{B,0}(\oxb^{r} \ot \oxa^{s}) =\sum_{j=0}^{r} {r \choose j} (s)^{[j]}(\alpha \oxa^{t})^{j}\oxa^{s-j} \ot \oxb^{r-j}.
\end{equation}
\vspace{5mm}

Starting on $\delta_{1}$ we set $\delta_{1}'(z) = \oxb \cdot z = \delta(z)$. Now let $g = \sum_{i=0}^{n} \beta_{i} \oxa^{i} \in A$ then
\begin{align*}
(d_{1} \delta_{1}' - \delta_{0} d_{1})(g) &= (\oxa \cdot \delta - \delta \oxa \cdot)(g) \\
&= \oxa \cdot \delta(g) - \delta(\oxa \cdot g) \\
&= \oxa\cdot (\sum_{i=0}^{n} \beta_{i} \delta(\oxa^{i})) - \delta(\sum_{i=0}^{n}\beta_{i}\oxa^{i+1}) \\
&= \oxa(\sum_{i=0}^{n}\beta_{i}[(i)\alpha \oxa ^{t +(i-1)})] - \sum_{i=0}^{n} \beta_{i}(i+1)\alpha \oxa^{t+ i} \\
&=\sum_{i=0}^{n}\beta_{i}(i) \alpha \oxa^{t+i} - \sum_{i=0}^{n} \beta_{i}(i+1) \alpha \oxa ^{t+i}\\
&=-(\sum_{i=0}^{n}\beta_{i}\alpha \oxa^{t+i}).
\end{align*} 
We need a map $\delta_{1}''$ such that $(d_{1} \delta_{1}' - \delta_{0} d_{1})(g) = \oxa \cdot \delta_{1}''(g)$ hence we define $\delta_{1}''$ on $\oxa^{i}$ by 
$$\delta_{1}''(\oxa^{i}) = -\alpha \oxa^{t+(i-1)}.$$
and then extend linearly to all elements of $A$.
Thus letting $\delta_{1} = \delta_{1}' - \delta_{1}''$ we have 
\begin{align*}
\delta_{1} (g) &= \delta_{1}'(g) - \delta_{1}''(g) \\
 &= \sum_{i=0}^{n}\beta_{i}(i)\alpha \oxa ^{t +(i-1)} - ( -\sum_{i=0}^{n}\beta_{i}\alpha \oxa^{t+(i-1)}) \\
&= \sum_{i=0}^{n}\beta_{i}(i+1)\alpha \oxa^{t + (i-1)}.
\end{align*}
Therefore 
\begin{align*}
\tau_{B,1}(\oxb \ot \oxa) &=   \sigma_{1}(\oxa) \ot \oxb + \delta_{1}(\oxa) \ot 1 \\
&= \oxa \ot \oxb + \delta(\oxa) \ot 1 - \delta_{1}''(\oxa) \ot 1 \\
&= \oxa \ot \oxb + \delta(\oxa) \ot 1 - (-\alpha \oxa^{t}) \ot 1 \\
&= \oxa \ot \oxb + 2\delta(\oxa) \ot 1.
\end{align*}
 We then extend the map using conditions $(2.11)$ and $(2.12)$ to obtain 
\begin{equation}
 \tau_{B,1}(\oxb^{r} \ot \oxa^{s})  \sum_{j = 0}^{r} {r \choose j} (s+1)^{[j]}(\alpha \oxa^{t})^{j}\oxa^{s -j} \ot \oxb^{r-j} .
\end{equation}
\vspace{5mm}

Then starting on $\delta_{2}$ we let $\delta_{2}'(z) = \delta(z)$. And since $im(\delta) \subset (\oxa)$ and $t\geq 2$ we have
\begin{align*}
d_{2}\delta_{2}' - \delta_{1} d_{2} &= \oxa^{p-1} \cdot \delta - (\delta_{1}' - \delta_{1}'')\oxa^{p-1}\cdot \\
&= \oxa^{p-1} \cdot \delta - \delta \oxa^{p-1}\cdot +\delta_{1}''\oxa^{p-1}\cdot \\
&= 0
\end{align*}
Hence we may choose $\delta_{2}'' = 0 $, $\delta_{2} = \delta$ and thus $\tau_{B,2} = \tau$. Finally we note that since the differentials of our projective resolution alternate between $\ox \cdot$ and $\ox^{p-1} \cdot$ then the chain maps $\tau_{B,i}$ themselves will also alternate. Hence these calculations of $\tau_{B,i}$ repeat for all remaining $i$ and we therefore give the following lemma. 
\vspace{5mm}

\begin{lemma}\label{comp}
For any integer $i \geq 0$, we define the chain map $\tau_{B,\bu}$ by letting $\tau_{B,i}: B \ot A \rightarrow A \ot B$ be the $\K$-linear map defined as follows:
$$\tau_{B,i}(\oxb^{r} \ot \oxa^{s}) =
\begin{cases} 
 \tau(\oxb^{r} \ot \oxa^{s}) = \sum_{j=0}^{r} {r \choose j} (s)^{[j]}(\alpha \oxa^{t})^{j}\oxa^{s-j} \ot \oxb^{r-j}    & i \, \,  is \, \, even \\
  \sum_{j = 0}^{r} {r \choose j} (s+1)^{[j]}(\alpha \oxa^{t})^{j}\oxa^{s -j} \ot \oxb^{r-j}     & i \,\,  is \, \, odd \\
 \end{cases}
$$

Then 

(a) Each $\tau_{B,i}$ is a bijective map whose inverse is 
$$\tau_{B,i}^{-1}(\oxa^{s} \ot \oxb^{r}) =
\begin{cases} 
 \sum_{j=0}^{r} {r \choose j} (s)^{[j]} \oxb^{r-j} \ot (-\alpha \oxa^{t})^{j} \oxa^{s-j} & i \, \,  is \, \, even \\
  \sum_{j = 0}^{r} {r \choose j} (s+1)^{[j]} \oxb^{r-j}\ot (-\alpha \oxa^{t})^{j} \oxa^{s-j}     & i \,\,  is \, \, odd \\
 \end{cases}
$$

(b) $\tau_{B,\bu}$ is compatible with $\tau$. That is for every $i$, $\tau_{B,i}$ satisfies the following relations
\begin{equation}
\tau_{B,i}  (m_{B} \ot 1) = (1 \ot m_{B})  (\tau_{B,i} \ot 1)  (1 \ot \tau_{B,i})
\end{equation}
\begin{equation}
\tau_{B,i}  (1 \ot \rho_{A,i}) = (\rho_{A,i} \ot 1)  (1 \ot \tau_{B,i})  (\tau \ot 1)  
\end{equation}

(c) $\tau_{B,\bu}:B \ot P_{\bu}(A) \rightarrow P_{\bu}(A) \ot B$ is a chain map. \\
And thus $P_{\bu}(A)$ is compatible with $\tau$ via $\tau_{B,\bu}$. 

\end{lemma}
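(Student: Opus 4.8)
The plan is to verify the three claims (a), (b), (c) in order, exploiting the fact that in this example everything is written down explicitly as sums involving binomial coefficients and the generalized rising factorials $(s)^{[j]}$, so that the abstract machinery of Section 3 reduces to concrete identities. For part (a), I would check that the proposed map $\tau_{B,i}^{-1}$ is a genuine two-sided inverse by direct computation: compose $\tau_{B,i}$ with the candidate inverse and show the result is the identity on basis elements $\ox^s \ot \ox^r$ (and $\ox^r \ot \ox^s$ for the other order). The key algebraic input is a convolution identity for the $(s)^{[j]}$ — something of the shape $\sum_{j} (-1)^{j}\binom{r}{j}(s)^{[\ell]}(\text{shifted factorial}) = \delta_{\ell,0}$-type cancellation — which should follow from the multiplicative/telescoping structure of $(s)^{[j]} = \prod_{i=0}^{j-1}(s+i(t-1))$. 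Since $\sigma_i = \mathrm{id}$ and $\sigma_i$ is trivially bijective, the formula from Theorem 3.10 for the inverse, $z \ot \ox \mapsto \ox \ot \sigma_i^{-1}(z) - 1 \ot \delta_i(\sigma_i^{-1}(z))$, gives the $r=1$ case essentially for free; the general $r$ case is the iteration of this, which is where the binomial identity enters.

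For part (b), I would observe that relation (2.11) is precisely the statement that $\tau_{B,i}$ was \emph{defined} by iterating (2.11) starting from its value on $\ox \ot \ox$ — but since here we are claiming a closed form, the content is that the closed form is consistent with that iteration, i.e. $\tau_{B,i}(\ox^{r_1+r_2}\ot \ox^s)$ computed directly agrees with first splitting $\ox^{r_1}\cdot\ox^{r_2}$. This is again a binomial-convolution identity (Vandermonde-type, twisted by the rising factorials). For relation (2.12), I would invoke Lemma 3.7 rather than re-derive it: we have $\sigma_\bu$ (identity), $\delta_\bu$ (constructed above in the $\delta_0,\delta_1,\delta_2$ computation, which then repeats with period $2$), and the module action on each $P_i(A)=A$ given by $\ox\cdot a = \delta(a)$; it then suffices to check the hypothesis of Lemma 3.7, namely $s_{(k,j)}(\sigma_\bu,\delta_\bu)=0$ for $k+j=n=p$, $0\le k\le p-1$, $1\le j\le n$. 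Since $\sigma_i=\mathrm{id}$, this reduces as in Lemma 4.3 to showing the relevant composite of $\delta_i$'s vanishes; because $\delta_0=\delta_2=\delta$ with $\delta^p=0$ (Lemma 4.4) and $\delta_1$ differs from $\delta$ only by the explicit correction term $-\delta_1''$, a direct check that the same $\binom{p}{j}$-divisibility argument still applies should close it.

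For part (c), I would verify the chain-map condition $d_i \tau_{B,i} = \tau_{B,i-1} d_i$ (appropriately tensored) on the resolution $P_\bu(A)$ whose differentials alternate between $\ox\cdot$ and $\ox^{p-1}\cdot$. Concretely this means checking two identities, one for the transition from an even index to an odd index and one for the reverse, each amounting to comparing $\tau_{B,i}$ applied after multiplication by $\ox$ (resp. $\ox^{p-1}$) against multiplication followed by $\tau_{B,i-1}$; given the explicit formulas with the shift $s \mapsto s+1$ in the odd case, these are short computations with the rising factorials (the identity $(s+1)^{[j]}$ vs. $(s)^{[j]}$ differing by boundary factors is exactly what makes the alternation work). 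Once (a), (b), (c) hold, the final sentence ``$P_\bu(A)$ is compatible with $\tau$ via $\tau_{B,\bu}$'' is immediate from Definition 2.10 together with Lemma 3.7.

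The main obstacle I expect is purely combinatorial: isolating and proving the right convolution/telescoping identity for the generalized rising factorials $(s)^{[j]}=\prod_{i=0}^{j-1}(s+i(t-1))$ that simultaneously (i) makes $\tau_{B,i}^{-1}$ an inverse, (ii) gives consistency with (2.11), and (iii) handles the even/odd bookkeeping. Everything else — the module-theoretic content — is delegated to Lemmas 3.2, 3.6, 3.7 and the characteristic-$p$ vanishing of $\binom{p}{j}$, so the real work is making sure these $q=1$-analogues of the familiar $q$-binomial identities are stated and applied correctly, in particular tracking the sign $(-\alpha\ox^t)^j$ and the degree shifts $s \mapsto s + j(t-1)$ carefully so that truncation at $\ox^p=0$ is respected throughout.
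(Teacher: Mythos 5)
Your proposal is correct and, for parts (a), (c), and relation (2.11) of part (b), follows essentially the same route as the paper: direct computation on basis elements $\oxb^{r}\ot\oxa^{s}$, with the work concentrated in exactly the combinatorial identities you isolate, namely the telescoping property $(x)^{[j+k]}=(x)^{[j]}(x+j\beta)^{[k]}$ with $\beta=t-1$, the identity $\binom{r}{h}\binom{h}{j}=\binom{r}{j}\binom{r-j}{h-j}$, the alternating-sum vanishing $\sum_{j}(-1)^{j}\binom{h}{j}=0$ for (a), and Vandermonde for (2.11). The one genuine divergence is relation (2.12): the paper does \emph{not} invoke Lemma 3.7 but verifies (2.12) by another direct computation, whose crux is the rising-factorial binomial theorem $(x+y)^{[h]}=\sum_{j}\binom{h}{j}(x)^{[j]}(y)^{[h-j]}$, proved by induction on $h$ using Pascal's rule. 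Your alternative---checking the hypothesis $s_{(k,j)}(\sigma_{\bu},\delta_{\bu})=0$ of Lemma 3.7---is legitimate and arguably cleaner, since with $\sigma_{i}=\mathrm{id}$ it reduces to $\binom{p}{j}\delta_{i}^{j}=0$; but be aware that for $j=p$ the coefficient is $\binom{p}{p}=1$, so the characteristic-$p$ divisibility argument does not apply there and you genuinely need $\delta_{i}^{p}=0$, which for $\delta_{1}(\oxa^{m})=(m+1)\alpha\oxa^{t+m-1}$ follows from the degree count $m+p(t-1)\geq p$ rather than from binomial coefficients. Your route buys economy by reusing Section 3, at the cost of resting on the somewhat sketched diagram-chase in Lemma 3.7 and of still having to verify that the closed forms in the statement agree with the recursive extension (which is in effect the same Vandermonde computation); the paper's route is longer but entirely self-contained and produces the explicit identity $(s_{1}+s_{2}+1)^{[h]}=\sum_{j}\binom{h}{j}(s_{1})^{[j]}(s_{2}+1)^{[h-j]}$ that makes the verification transparent.
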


\begin{proof}
Let $\tau_{B,i}$ be defined as above and $\beta = (t-1)$. We first make a useful calculation. For any $ j, k \in \N$
 $$(x)^{[j +k]} = \prod_{i = 0}^{j+k - 1}(x + i\beta) = x ( x + \beta) .... (x + (j+k -1) \beta) =[ \prod_{i = 0}^{j-1} (x + i \beta)][\prod_{i = j}^{j+k -1}(x+i \beta)]$$
$$=[ \prod_{i = 0}^{j-1} (x + i \beta)][\prod_{i =0}^{k -1}(x+j\beta + i \beta)] =(x)^{[j]}(x + j\beta)^{[k]}$$

\vspace{5mm}

Also we will use the following fact
 $${r \choose h} {h \choose j} = \frac{r!}{(r-h)!h!}\cdot \frac{h!}{(h-j)!j!} = \frac{r!}{(r-h)!(h-j)!j!}$$
$$ \frac{r!}{j!(r-j)!} \cdot \frac{(r-j)!}{(h-j)!(r-h)!} = {r \choose j} {r-j \choose h-j}$$
for all $r,h,j \in \N$.
\vspace{10mm}

(a) Now by construction $\tau_{B,i}$ is $\K$-linear. We will show $\tau_{B,i}  \tau_{B,i}^{-1} = 1_{A \ot  B}$ for the case when $i$ is even. The proof of the remaining case is similar. 
\begin{align*}
&\tau  \tau^{-1}(\oxa^{s} \ot \oxb^{r}) = \tau(\sum_{j = 0}^{r} {r \choose j} (-\alpha)^{j}(s)^{[j]} \oxb^{r-j} \ot \oxa^{s + j \alpha}) \\
&= \sum_{j=0}^{r} {r \choose j} (-\alpha)^{j} (s)^{[j]} ( \sum_{k =0}^{r-j} {r-j \choose k} ( s + j \beta))^{[k]} \alpha^{k} \oxa^{kt} \oxa^{s +j(t-1) - k} \ot \oxb^{r-(j+k)}) \\
&=  \sum_{j=0}^{r}\sum_{k =0}^{r-j}{r \choose j}{r-j \choose k}(-1)^{j} \alpha^{j+k} (s)^{[j]}  ( s + j\beta)^{[k]} \oxa^{s + (j+k)t - (j+k)} \ot \oxb^{r- (j+k)} \\
&= \sum_{h = 0}^{r}(\sum_{j + k = h} {r \choose j}{r-j \choose k}(-1)^{j} \alpha^{h} (s)^{[h]} \oxa^{s + h\beta} \ot \oxb^{r - h})
\end{align*}

Thus to show that  $\tau_{B,i}  \tau_{B,i}^{-1} = 1_{A \ot  B}$ it suffices to show that 
$$\sum_{j + k = h} {r \choose j}{r-j \choose k}(-1)^{j}\alpha^{h} (s)^{[h]} = 
\begin{cases}
1 & h = 0 \\
0 & h \neq 0 \\
\end{cases}
$$
Now if $h = 0$ then both $j$ and $k$ must be $0$ and hence the sum is clearly equal to $1$. Now suppose $h \neq 0$. Then we have 
\begin{align*}
&\sum_{j + k = h} {r \choose j}{r-j \choose k}(-1)^{j} \alpha^{h} (s)^{[h]} = \alpha^{h} (s)^{[h]} ( \sum_{j = 0}^{h}(-1)^{j} {r \choose j}{r-j \choose h-j}) \\
&=  \alpha^{h} (s)^{[h]} ( \sum_{j = 0}^{h}(-1)^{j} {r \choose h} { h \choose j}) =   \alpha^{h} (s)^{[h]}{ r \choose h} ( \sum_{j = 0}^{h}(-1)^{j} {h \choose j}) = 0
\end{align*}
and hence $\tau_{B,i}  \tau_{B,i}^{-1} = 1_{A \ot  B}$. 
\vspace{7mm}

(b) We show the case for $i$ odd. The remaining case is similar. For relation $(4.9)$ we have
\begin{align*} 
&\tau_{B,i}  (m_{B} \ot 1)(\oxb^{r_{1}} \ot \oxb^{r_{2}} \ot \oxa^{s}) = \tau_{B,i}( \oxb^{r_{1} + r_{2}} \ot \oxa^{s}) \\
&= \sum_{j = 0}^{r_{1} + r_{2}} { r_{1} + r_{2} \choose j}(s + 1)^{[j]} \oxa^{s -j} (\alpha \oxa^{t})^{j} \ot \oxb^{r_{1} + r_{2} - j}
\end{align*}

and

\begin{align*}
&(1 \ot m_{B})  (\tau_{B,i} \ot 1)  (1 \ot \tau_{B,i})(\oxb^{r_{1}} \ot \oxb^{r_{2}} \ot \oxa^{s}) \\
& =  (1 \ot m_{B})  (\tau_{B,i} \ot 1) ( \oxb^{r_{1}} \ot [\sum_{j = 0}^{r_{2}}{ r_{2} \choose j } (s + 1)^{[j]}\oxa^{s - j} (\alpha \oxa^{t})^{j} \ot \oxb^{r_{2} -j}]    )   \\
&= (1 \ot m_{B})  (\tau_{B,i} \ot 1 )  ( \sum_{j = 0}^{r_{2}}{ r_{2} \choose j } (s + 1)^{[j]} \alpha^{j} \oxb^{r_1} \ot \oxa^{s + j\beta} \ot \oxb^{r_{2} -j}) \\
&=  (1 \ot m_{B})( \sum_{j = 0}^{r_{2}}{ r_{2} \choose j } (s + 1)^{[j]} \alpha^{j} [ \sum_{k = 0}^{r_{1}} {r_{1} \choose k}(s + j\beta+1)^{[k]} \oxa^{s + j\beta - k} \delta(\oxa)^{k} \ot \oxb^{r_{1} - k} ] \ot \oxb^{r_{2} - j})\\
&= \sum_{j = 0}^{r_{2}}{ r_{2} \choose j } (s + 1)^{[j]} \alpha^{j} ( \sum_{k = 0}^{r_{1}} {r_{1} \choose k}(s + j\beta +1)^{[k]} \oxa^{s + j(t-1) - k} (\alpha \oxa^{t})^{k} \ot \oxb^{r_{1} + r_{2} - (j + k)} ) \\
&= \sum_{j = 0}^{r_{2}}{ r_{2} \choose j } (s + 1)^{[j]}  ( \sum_{k = 0}^{r_{1}} {r_{1} \choose k}(s + j\beta +1)^{[k]} \oxa^{s - (j + k)}\alpha^{j+k}\oxa^{(j+k)t}  \ot \oxb^{r_{1} + r_{2} - (j + k)} ) \\
&=\sum_{h = 0}^{r_{1} + r_{2}} \sum_{j+k = h} { r_{2} \choose j }{r_{1} \choose k}  (s + 1)^{[j]}  (s + j\beta +1)^{[k]}  \oxa^{s-h} (\alpha \oxa^{t})^{h} \ot \oxb^{r_{1} + r_{2} -h} .  
\end{align*}

Thus to show that $\tau_{B,i}  (m_{B} \ot 1) = (1 \ot m_{B})  (\tau_{B,i} \ot 1)  (1 \ot \tau_{B,i}) $ it is sufficient to show 
$${ r_{1} + r_{2} \choose h}(s + 1)^{[h]}  = \sum_{j+k =h} { r_{2} \choose j }{r_{1} \choose k}  (s + 1)^{[j]}  (s + j\beta +1)^{[k]}.$$
Our previous calculation gives us $(s + 1)^{[h]} = (s + 1)^{[j]}  (s + j\beta +1)^{[k]}$. Finally  by a simple re-indexing and use of a well known identity we have 
$$\sum_{j+k =h} {r_{2} \choose j} {r_{1} \choose k} = \sum_{j =0}^{h} {r_{2} \choose j} { r_{1} \choose h-j} = {r_{1} + r_{2} \choose h}.$$
Therefore $\tau_{B,i}  (m_{B} \ot 1) = (1 \ot m_{B})  (\tau_{B,i} \ot 1)  (1 \ot \tau_{B,i}) $ for $i$ odd.

\vspace{10mm}

For relation $(4.10)$ we have
 \begin{align*}
\tau_{B,i}(1 \ot \rho_{A,i})(\oxb^{r} \ot \oxa^{s_{1}} \ot \oxa^{s_{2}}) &= \tau_{B, i}(\oxb^{r} \ot \oxa^{s_{1} + s_{2}})\\
&= \sum_{j = 0}^{r} {r \choose j} (s_{1} + s_{2} + 1)^{[j]} \oxa^{s_{1} + s_{2} - j}(\alpha \oxa^{t})^{j} \ot \oxb^{r - j}
\end{align*}
and 
\begin{align*}
& (\rho_{A,i} \ot 1) (1 \ot \tau_{B,i}) (\tau \ot 1) (\oxb^{r} \ot \oxa^{s_{1}} \ot \oxa^{s_{2}})  \\
&=  (\rho_{A,i} \ot 1) (1 \ot \tau_{B,i}) ([\sum_{j = 0}^{r}{r \choose j}(s_{1})^{[j]}\oxa^{s_{1} - j}(\alpha \oxa^{t})^{j} \ot \oxb^{r-j}] \ot \oxa^{s_{2}}) \\
&=  (\rho_{A,i} \ot 1)( \sum_{j = 0}^{r} {r \choose j} (s_{1})^{[j]}\oxa^{s_{1} -j}(\alpha \oxa^{t})^{j} \ot [\sum_{k =0}^{r-j} { r - j \choose k}(s_{2} +1)^{[k]} \oxa^{s_{2} -k}(\alpha \oxa^{t})^{k} \ot \oxb ^{r - j -k}] ) \\
&=\sum_{j = 0}^{r} \sum_{k= 0}^{r -j} { r \choose j} {r - j \choose k}(s_{1})^{[j]}(s_{2} +1 )^{[k]} \oxa^{s_{1} + s_{2} - (j+k)}(\alpha \oxa^{t})^{j+k} \ot \oxb^{r- (j+k)} \\
&= \sum_{h = 0}^{r}(\sum_{j + k = h} { r \choose j} {r - j \choose k}(s_{1})^{[j]}(s_{2} +1 )^{[k]}) \oxa^{s_{1} + s_{2} - h} (\alpha \oxa^{t})^{h} \ot \oxb^{r - h} .
\end{align*}

\vspace{7mm}

Now since ${r \choose h} {h \choose j}= {r \choose j} {r-j \choose h-j}$, we have that 
\begin{align*} 
\sum_{j + k = h} {r \choose j} {r - j \choose k}(s_{1})^{[j]}(s_{2} + 1)^{[k]} &=  \sum_{j = 0}^{h} {r \choose j} {r - j \choose h - j}(s_{1})^{[j]}(s_{2} + 1)^{[h-j]} \\
&=  \sum_{j = 0}^{h}  {r \choose h} {h \choose j}  (s_{1})^{[j]}(s_{2} + 1)^{[h - j]}.   
\end{align*}

Hence to show that $\tau_{B,i}  (1 \ot \rho_{A,i}) = (\rho_{A,i} \ot 1)  (1 \ot \tau_{B,i}) (\tau \ot 1)  $ we simply need to show that 
$$ (s_{1} + s_{2} + 1)^{[h]} = \sum_{j = 0}^{h}  {h \choose j}  (s_{1})^{[j]}(s_{2} + 1)^{[h-j]} .  $$

Let $x, y \in \N$. In the following we will use the previously shown fact that $(x)^{[j +1]} = x (x + \beta)^{[j]}$. Now we proceed by induction on $n$ to show that 
$$(x + y)^{[n]} = \sum_{j=0}^{n}{n \choose j} (x)^{[j]}(y)^{[n - j]}.$$
The case $n = 1$ is given by a straightforward calculation:

$$(x + y)^{[1]} = x + y = 1(y)^{[1]} + (x)^{[1]}1 = \sum_{j =0}^{1}{1 \choose j}(x)^{[j]}(y)^{[1-j]}.  $$
Assume that for $n = h-1$ we have $(x + y)^{[h-1]} = \sum_{j = 0}^{h-1} { h-1 \choose j}(x)^{[j]}(y)^{[h-1 -j]}$\\
Then for $n = h$ we have
\begin{align*}
&\sum_{j = 0}^{h} { h \choose j}(x)^{[j]}(y)^{ [ h -j ]} 
= \sum_{j = 0}^{h-1}{ h \choose j}(x)^{[j]}(y)^{[h -j]} + (x)^{[h]} \\
&= \sum_{j=0}^{h-1}({h-1 \choose j-1} + { h-1 \choose j})(x)^{[j]}(y)^{[h -j]} + (x)^{[h]} \\
&=  \sum_{j=0}^{h-1}{h-1 \choose j-1}(x)^{[j]}(y)^{[h -j]} + \sum_{j=0}^{h-1}{ h-1 \choose j}(x)^{[j]}(y)^{[h -j]} + (x)^{[h]}  \\
&=  \sum_{j=1}^{h-1}{h-1 \choose j-1}(x)^{[j]}(y)^{[h -j]} + \sum_{j=0}^{h-1}{ h-1 \choose j}(x)^{[j]}(y)^{[h -j]} + (x)^{[h]}   \\
&= \sum_{j = 0}^{h-2}{ h-1 \choose j} (x)^{[j+1]}(y)^{[h-1-j]} +  \sum_{j=0}^{h-1}{ h-1 \choose j}(x)^{[j]}(y)^{[h -j]} + (x)^{[h]} \\
&=  \sum_{j = 0}^{h-1}{ h-1 \choose j} (x)^{[j+1]}(y)^{[h-1-j]} +  \sum_{j=0}^{h-1}{ h-1 \choose j}(x)^{[j]}(y)^{[h -j]} \\
&= x  \sum_{j = 0}^{h-1}{ h-1 \choose j} (x + \beta)^{[j]}(y)^{[h-1-j]}  + y \sum_{j=0}^{h-1}{ h-1 \choose j}(x)^{[j]}(y + \beta)^{[h-1 -j]}
\end{align*}
By the induction hypothesis, 
\begin{align*}
\sum_{j = 0}^{h} { h \choose j}(x)^{[j]}(y)^{[h -j]} &= x ( x+ \beta + y)^{[h-1]} + y ( x + y + \beta)^{[h-1]} \\
&= (x + y)(x + y + \beta)^{[h-1]} = (x + y )^{[h]} 
\end{align*}
and therefore $(s_{1} + s_{2} + 1)^{[h]} = \sum_{j = 0}^{h}  {h \choose j}  (s_{1})^{[j]}(s_{2} + 1)^{[h-j]}   $. Hence  $$\tau_{B,i} (1 \ot \rho_{A,i}) = (\rho_{A,i} \ot 1)  (1 \ot \tau_{B,i})  (\tau \ot 1)  $$ for $i$ odd. 
\vspace{10mm}

(c) Consider the following diagram where $i$ is odd
$$\begin{tikzcd}[row sep=large, column sep = large]\label{dig}
 B \ot A \arrow[r, "1 \ot \oxa^{p -1} \cdot "] \arrow[d, dashrightarrow, "\tau"] & B \ot A  \arrow[r, "1 \ot \oxa \cdot"] \arrow[d, dashrightarrow, "\tau_{B,i}"] & B \ot A   \arrow[d, dashrightarrow, "\tau"] \\
 A \ot B \arrow[r, "\oxa^{p -1} \cdot \ot 1 "] & A \ot B  \arrow[r, "\oxa \cdot \ot 1"] & A \ot B  \\
\end{tikzcd}$$

Evaluating the right square of the diagram gives us
\begin{align*}
\tau (1 \ot \oxa)(\oxb^{r} \ot \oxa^{s}) &= \tau(\oxb^{r} \ot \oxa^{s+1}) \\
&= \sum_{j=0}^{r} {r \choose j} (s+1)^{[j]}\oxa^{s+1-j}(\alpha \oxa^{t})^{j} \ot \oxb^{r-j} \\
&= (\oxa \ot 1)(\sum_{j=0}^{r} {r \choose j} (s+1)^{[j]}\oxa^{s-j}(\alpha \oxa^{t})^{j} \ot \oxb^{r-j} ) \\
&= (\oxa \ot 1) \tau_{B,1}(\oxb^{r} \ot \oxa^{s}) .
\end{align*}

Evaluating the left square of the diagram gives us
\begin{align*}
\tau_{B,i}(1 \ot \oxa^{p -1} ) (\oxb^{r} \ot \oxa^{s}) &= \tau_{B,i}(\oxb^{r} \ot \oxa^{s + p -1}) \\
&=  \sum_{j=0}^{r} {r \choose j} (s+p)^{[j]}\oxa^{s+p-1-j}(\alpha \oxa^{t})^{j} \ot \oxb^{r-j}.
\end{align*}

Thus we have 
\begin{align*}
\tau_{B,i}(1 \ot \oxa^{p -1} ) (\oxb^{r} \ot \oxa^{s}) &=
\begin{cases}
0 \, \, \, \, \, \, \, \, \, \, \, \, \, for \, \,  s \neq 0 \\
 \sum_{j=0}^{r} {r \choose j} (p)^{[j]}\oxa^{p-1-j}(\alpha \oxa^{t})^{j} \ot \oxb^{r-j} \\
\end{cases} \\
&= 
\begin{cases}
0\, \, \, \, \, \, \, \, \, \, \, \, \, \, \, \, \, \, \, \, \, \, \, \, \, \, \, \, \, \, \, \, s > 0\\
\oxa^{p-1} \ot \oxb^{r} \, \, \, \, \, \, s = 0\\
\end{cases}
\end{align*}
and 
\begin{align*}
(\oxa^{p-1} \ot 1)\tau(\oxb^{r} \ot \oxa^{s}) &= (\oxa^{p-1} \ot 1) (\sum_{j=0}^{r} {r \choose j} (s)^{[j]}\oxa^{s-j}(\alpha \oxa^{t})^{j} \ot \oxb^{r-j}) \\
&= \sum_{j=0}^{r} {r \choose j} (s)^{[j]}\oxa^{s+(p-1)-j}(\alpha \oxa^{t})^{j} \ot \oxb^{r-j} \\
&=
\begin{cases}
0\, \, \, \, \, \, \, \, \, \, \, \, \, \, \, \, \, \, \, \, \, \, \, \, \, \, \, \, \, \, \, \, s > 0\\
\oxa^{p-1} \ot \oxb^{r} \, \, \, \, \, \, s = 0\\
\end{cases}.
\end{align*}
Therefore 
$$\begin{tikzcd}[row sep=large, column sep = large]\label{dig}
 B \ot A \arrow[r, "1 \ot \oxa^{p -1} \cdot "] \arrow[d, dashrightarrow, "\tau"] & B \ot A  \arrow[r, "1 \ot \oxa \cdot"] \arrow[d, dashrightarrow, "\tau_{B,i}"] & B \ot A   \arrow[d, dashrightarrow, "\tau"] \\
 A \ot B \arrow[r, "\oxa^{p -1} \cdot \ot 1 "] & A \ot B  \arrow[r, "\oxa \cdot \ot 1"] & A \ot B  \\
\end{tikzcd}$$
commutes. Hence by repeated application of this calculation we see that $\tau_{B,\bu}$ is a chain map. 
\end{proof}
\vspace{5mm}

Thus by Theorem $3.10$ if we are given the standard projective resolution of $\K$ as a left $A = \K[x_{1}]/(x_{1}^{p})$-module
$$\begin{tikzcd}
P_{\bu}(A):  \cdots \arrow[r, "\oxa^{p-1} \cdot"]& A \arrow[r, "\oxa \cdot"] & A  \arrow[r, "\epsilon_{A}"] & \K  \arrow[r] &  0
 \end{tikzcd}$$
and the standard projective resolution of $\K$ as a  left $B =\K[x_{2}]/(x_{2}^{p})$-module
$$\begin{tikzcd}
P_{\bu}(B):  \cdots \arrow[r, "\oxb^{p-1} \cdot"]& B \arrow[r, "\oxb \cdot"] & B \arrow[r, "\epsilon_{B}"] & \K  \arrow[r] &  0
 \end{tikzcd}$$
we may construct a projective resolution of $\K$ using the twisted product complex of the two resolutions. That is our projective resolution of $\K$ as a left $A \ot_{\tau} B$-module is given by:
$$\begin{tikzcd}
Y_{\bu}(\K):  \cdots \arrow[r, "d_{2}"]& Y_{1}\arrow[r, "d_{1}"] & Y_{0} \arrow[r, "\epsilon_{A}"] & \K  \arrow[r] &  0
 \end{tikzcd}$$
where 
$$
Y_{n} = \oplus_{i + j = n}Y_{i,j}  \, \, \, \,   \, \, \, \, \, \, \, \, \textrm{for} \, \, \, \, \, \, \, \, \, \, Y_{i,j} = P_{i}(A) \ot P_{j}(B) = A \ot B $$
and
$$
d_{ n} = \sum_{i + j = n} d_{i,j} \, \, \, \,   \, \, \, \, \, \, \, \, \textrm{for} \, \, \, \, \, \, \, \, \, \, d_{i,j} = d_{i,j}^{h} + d_{i,j}^{v}
$$
with $d_{i,j}^{h} = \oxa \cdot \ot 1$ for $i$ odd, $d_{i,j}^{h} = \oxa^{p-1} \cdot \ot 1$ for $i$ even, $d_{i,j}^{v} = (-1)^{i} \ot \oxb \cdot $ for $j$ odd, and finally $d_{i,j}^{v} = (-1)^{i} \ot \oxb^{p-1} \cdot$ for $j$ even.
Doing so gives the following projective resolution;

$$\begin{tikzcd}
\cdots \arrow[r, "d_{3}"] & (A \ot B)^{\op 3} \arrow[r, "d_{2}"] & (A \ot B)^{\op 2} \arrow[r, "d_{1}"] & A \ot B \arrow[r] & \K \arrow[r] & 0 .
\end{tikzcd}$$


\begin{thebibliography}{99}


\bibitem{BOP} \textsc{P. A. Bergh and S. Oppermann},
``Cohomology of twisted tensor products," J. Algebra 320 (2008), 3327-3338.

\bibitem{BRN} \textsc{E. H. Brown Jr.},
``Twisted Tensor Products, I", Annals of Math.,  69 (1959), no. 1,  223 -246.

\bibitem{CSV} \textsc{A. \v Cap, H. Schichl, and J. Van\v zura},
``On twisted tensor products of algebras," Comm. Algebra 23 (1995), no. 12, 4701 - 4735.

\bibitem{GOSR} \textsc{N. S. Gopalakrishnan and R. Sridharan},
``Homological dimension of Ore-extensions,"  Pacific J. Math. 19 (1966), no. 1, 67 - 75.

\bibitem{NWW} \textsc{V. C. Nguyen, X. Wang and S. Witherspoon},
``Finite generation of some cohomology rings via twisted tensor product and Anick resolutions", J. Pure Appl Algebra 223 (2019), no. 1, 316-339.

\bibitem{ORE} \textsc{\O. Ore}, 
``Theory of non-commutative polynomials", Annals of Math. 34 (1933), 480-508.

\bibitem{RTTP}
\textsc{A. V. Shepler and S. Witherspoon},
``Resolutions for twisted tensor products",
Pacific J. Math. 298 (2019), no. 2, 445 -469.


\end{thebibliography}
\end{document}